\documentclass[12pt,reqno]{article}

\usepackage[usenames]{color}
\usepackage{amssymb}
\usepackage{amsmath}
\usepackage{amsthm}
\usepackage{amsfonts}
\usepackage{amscd}
\usepackage{graphicx}

\usepackage[colorlinks=true,
linkcolor=webgreen,
filecolor=webbrown,
citecolor=webgreen]{hyperref}

\definecolor{webgreen}{rgb}{0,.5,0}
\definecolor{webbrown}{rgb}{.6,0,0}

\usepackage{color}
\usepackage{fullpage}
\usepackage{float}

\usepackage{graphics}
\usepackage{latexsym}

\setlength{\textwidth}{6.5in}
\setlength{\oddsidemargin}{.1in}
\setlength{\evensidemargin}{.1in}
\setlength{\topmargin}{-.1in}
\setlength{\textheight}{8.4in}

\begin{document}

\theoremstyle{plain}
\newtheorem{theorem}{Theorem}
\newtheorem{corollary}[theorem]{Corollary}
\newtheorem{lemma}{Lemma}
\newtheorem{example}{Examples}
\newtheorem*{remark}{Remark}

\begin{center}
\vskip 1cm
{\LARGE\bf
New harmonic number series
}

\vskip 1cm

{\large
Kunle Adegoke \\
Department of Physics and Engineering Physics, \\ Obafemi Awolowo University, 220005 Ile-Ife, Nigeria \\
\href{mailto:adegoke00@gmail.com}{\tt adegoke00@gmail.com}

\vskip 0.2 in

Robert Frontczak \\
Independent Researcher, 72762 Reutlingen \\ Germany \\
\href{mailto:robert.frontczak@web.de}{\tt robert.frontczak@web.de}
}

\end{center}

\vskip .2 in

\begin{abstract}
Based on a recent representation of the psi function due to Guillera and Sondow and independently Boyadzhiev,
new closed forms for various series involving harmonic numbers and inverse factorials are derived. 
A high point of the presentation is the rediscovery, by much simpler means, of a famous quadratic Euler sum 
originally discovered in 1995 by Borwein and Borwein.
\end{abstract}

\noindent 2010 {\it Mathematics Subject Classification}: 30B50, 33E20.

\noindent \emph{Keywords:} Harmonic number, Riemann zeta function, binomial coefficient, Euler sum.

\bigskip

\section{Introduction}

Our main purpose in this note is to discover the harmonic number series associated with the following identity:
\begin{equation}\label{main}
\sum_{n = 1}^\infty \frac{1}{n^2 \binom{{n + z}}{n}} = \zeta (2) - H_z^{(2)},\quad z\in\mathbb C\setminus Z^{-},
\end{equation}
where $\zeta(s)$ is the Riemann zeta function and $H_z^{(2)}$ is a second order harmonic number (both definitions are given below).
At $z=0$ identity~\eqref{main} subsumes the solution to the Basel problem and we will see that its derivative includes the well-known
relation between the Ap\'{e}ry constant and a classical Euler sum, namely,
\begin{equation*}
\sum_{n = 1}^\infty \frac{H_n}{n^2} = 2\,\zeta (3),
\end{equation*}
as a special case, $H_j$ being the $j$th harmonic number.

We will also evaluate the following series:
\begin{equation*}
\sum_{n = 1}^\infty \frac{1}{n (n + 1) \binom{{n + z}}{n}},\quad \sum_{n = 1}^\infty  {\frac{1}{{n\left( {n + 1} \right)\left( {n + 2} \right)\binom{{n + z}}{n}}}},\quad\sum_{n = 1}^\infty  {\frac{1}{{n\left( {n + 1} \right)\left( {n + 2} \right)\left( {n + 3} \right)\binom{{n + z}}{n}}}},
\end{equation*}
and derive the  harmonic and odd harmonic number series associated with them.

Identity~\eqref{main}, stated without proof in Sofo and Srivastava~\cite[Equation (2.13)]{sofo15}, is a consequence of the following representation of the digamma function $\psi(z)=\Gamma'(z)/\Gamma(z)$:
\begin{equation}\label{eq.v94kn2f}
\psi (z) = \sum_{n = 0}^\infty \frac{1}{{n + 1}}\sum_{k = 0}^n \binom{{n}}{k} (- 1)^k \ln \left( {z + k} \right),
\end{equation}
which holds for all $z\in\mathbb{C}$ with $\Re (z)>0$. This representation first appeared in~\cite[Theorem 5.1]{guillera08}
and was rediscovered by Boyadzhiev~\cite[Identity (5.18)]{Boyadzhiev1}.

Harmonic numbers $H_\alpha$ and odd harmonic numbers $O_\alpha$ are defined
for $0\ne\alpha\in\mathbb C\setminus\mathbb Z^{-}$ by the recurrence relations
\begin{equation*}
H_\alpha = H_{\alpha - 1} + \frac{1}{\alpha} \qquad \text{and} \qquad O_\alpha = O_{\alpha - 1} + \frac{1}{2\alpha - 1},
\end{equation*}
with $H_0=0$ and $O_0=0$. Harmonic numbers are connected to the digamma function through the fundamental relation
\begin{equation}\label{Harm_psi}
H_\alpha = \psi(\alpha + 1) + \gamma.
\end{equation}

Generalized harmonic numbers $H_\alpha^{(m)}$ and odd harmonic numbers $O_\alpha^{(m)}$ of order $m\in\mathbb C$ are defined by
\begin{equation*}
H_\alpha^{(m)} = H_{\alpha - 1}^{(m)} + \frac{1}{\alpha^m} \qquad \text{and} \qquad
O_\alpha^{(m)} = O_{\alpha - 1}^{(m)} + \frac{1}{(2\alpha - 1)^m},
\end{equation*}
with $H_0^{(m)}=0$ and $O_0^{(m)}=0$ so that $H_\alpha=H_\alpha^{(1)}$ and $O_\alpha=O_\alpha^{(1)}$.

The recurrence relations imply that if $\alpha=n$ is a non-negative integer, then
\begin{equation*}
H_n^{(m)} = \sum_{j = 1}^n \frac{1}{j^m} \qquad \text{and} \qquad O_n^{(m)} = \sum_{j = 1}^n \frac{1}{(2j - 1)^m}.
\end{equation*}

Generalized harmonic numbers are linked to the polygamma functions $\psi^{(r)}(z)$ of order $r$ defined by
\begin{equation*}
\psi^{(r)} (z) = \frac{d^r}{dz^r}\psi(z) = (- 1)^{r + 1} r!\sum_{j = 0}^\infty \frac{1}{{( j + z )^{r + 1} }},
\end{equation*}
through
\begin{equation*}
H_z^{(r)} = \zeta (r) + \frac{(- 1)^{r - 1}}{(r - 1)!} \psi^{(r - 1)} (z + 1),
\end{equation*}
where $\zeta(s)$ is the Riemann zeta function defined by
\begin{equation*}
\zeta(s) = \sum_{k=1}^\infty \frac{1}{k^s}, \quad s\in\mathbb{C}, \Re(s)>1.
\end{equation*}
The analytical continuation to all $s\in\mathbb{C}$ with $\Re(s)>0,s\neq 1,$ is given by
\begin{equation*}
\zeta(s) = (1-2^{1-s})^{-1} \sum_{k=1}^\infty \frac{(-1)^{k+1}}{k^s}.
\end{equation*}

\section{Proof of identity~\eqref{main}}

For completeness and a better readability we first prove identity~\eqref{main}.
\begin{theorem}
For all $z\in\mathbb C\setminus Z^{-}$ the following identity holds:
\begin{equation*}
\sum_{n = 1}^\infty \frac{1}{n^2 \binom{{n + z}}{n}} = \zeta (2) - H_z^{(2)}.
\end{equation*}
\end{theorem}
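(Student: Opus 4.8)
The plan is to derive the identity by differentiating the Guillera--Sondow--Boyadzhiev representation~\eqref{eq.v94kn2f} with respect to $z$ and then recognizing the resulting series. Since $\frac{d}{dz}\ln(z+k)=\frac{1}{z+k}$, formally differentiating~\eqref{eq.v94kn2f} term by term gives
\begin{equation*}
\psi'(z)=\sum_{n=0}^\infty\frac{1}{n+1}\sum_{k=0}^n\binom nk\frac{(-1)^k}{z+k},\qquad \Re(z)>0.
\end{equation*}
The first task is to justify this term-by-term differentiation of the outer series, and I expect this to be the only genuinely delicate point. I would handle it by producing the closed form of the inner sum (next step), which shows that the differentiated series converges uniformly on compact subsets of the half-plane $\Re(z)>0$, so that the standard theorem on differentiating a uniformly convergent series of holomorphic functions applies.

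The second step evaluates the inner finite sum in closed form via the classical partial-fraction identity
\begin{equation*}
\sum_{k=0}^n\binom nk\frac{(-1)^k}{z+k}=\frac{n!}{z(z+1)\cdots(z+n)},
\end{equation*}
which follows by matching residues at the simple poles $z=0,-1,\dots,-n$ of the two rational functions (both vanishing at infinity). Substituting this and reindexing $m=n+1$ turns the formula for $\psi'(z)$ into
\begin{equation*}
\psi'(z)=\sum_{m=1}^\infty\frac{(m-1)!}{m\,z(z+1)\cdots(z+m-1)}.
\end{equation*}

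The remaining steps are bookkeeping. Replacing $z$ by $z+1$, noting that $(z+1)(z+2)\cdots(z+m)=m!\binom{m+z}{m}$, and cancelling factorials yields
\begin{equation*}
\psi'(z+1)=\sum_{m=1}^\infty\frac{1}{m^2\binom{m+z}{m}}.
\end{equation*}
Finally, the relation between generalized harmonic numbers and polygamma functions recorded in the introduction gives, at $m=2$, the identity $H_z^{(2)}=\zeta(2)-\psi^{(1)}(z+1)=\zeta(2)-\psi'(z+1)$, so the right-hand side above equals $\zeta(2)-H_z^{(2)}$, proving the theorem for $\Re(z)>0$. The extension to all $z\in\mathbb Z\setminus\mathbb Z^{-}$ then follows by analytic continuation, since both sides are holomorphic there, the right-hand side being $\psi'(z+1)$, whose only singularities are the double poles at the negative integers.

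Should the justification of the differentiation prove cumbersome, an alternative route avoids~\eqref{eq.v94kn2f} entirely: start from the Beta-integral representation $\frac{1}{\binom{n+z}{n}}=n\int_0^1 t^{n-1}(1-t)^z\,dt$, interchange sum and integral (legitimate for real $z>-1$ since the integrand is nonnegative, after which analytic continuation takes over), sum $\sum_{n\ge1}t^n/n=-\ln(1-t)$, substitute $u=1-t$, expand $1/(1-u)$ geometrically, and use $\int_0^1 u^{a}\ln u\,du=-(a+1)^{-2}$ to reach $\sum_{j\ge1}(z+j)^{-2}=\psi'(z+1)$ directly.
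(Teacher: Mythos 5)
Your proposal is correct and follows essentially the same route as the paper: differentiate the Guillera--Sondow--Boyadzhiev representation~\eqref{eq.v94kn2f}, evaluate the inner alternating binomial sum in closed form as a reciprocal binomial coefficient, and identify the result with $\psi'(z+1)=\zeta(2)-H_z^{(2)}$. The only (immaterial) difference is that you apply the classical partial-fraction identity $\sum_{k=0}^n\binom nk\frac{(-1)^k}{z+k}=\frac{n!}{z(z+1)\cdots(z+n)}$ directly, whereas the paper first reindexes and then invokes the equivalent Frisch-identity special case $\sum_{k=1}^n\binom nk\frac{(-1)^{k-1}k}{z+k}=\binom{n+z}{n}^{-1}$; your extra attention to justifying term-by-term differentiation and the analytic continuation is a welcome addition rather than a deviation.
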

\begin{proof}
Differentiating the representation~\eqref{eq.v94kn2f} gives
\begin{equation*}
\begin{split}
\psi^{(1)}(z) = \zeta (2) - H_{z - 1}^{(2)} &= \sum_{n = 0}^\infty \frac{1}{n + 1}\sum_{k = 0}^n \binom{n}{k}\frac{(- 1)^k}{z + k} \\
& = \sum_{n = 1}^\infty \frac{1}{n} \sum_{k = 1}^n \binom{{n - 1}}{{k - 1}}\frac{(- 1)^{k - 1}}{z + k - 1} \\
& = \sum_{n = 1}^\infty \frac{1}{n} \sum_{k = 1}^n \frac{k}{n}\binom{{n}}{k}\frac{(- 1)^{k - 1}}{z + k - 1} \\
& = \sum_{n = 1}^\infty \frac{1}{n^2} \sum_{k = 1}^n k \binom{{n}}{k}\frac{(- 1)^{k - 1}}{z + k - 1} \\
& = \sum_{n = 1}^\infty \frac{1}{n^2} \frac{1}{\binom{{n + z - 1}}{n}};
\end{split}
\end{equation*}
and hence~\eqref{main}. Note that we used
\begin{equation*}
\sum_{k = 1}^n \binom{{n}}{k} \frac{(- 1)^{k - 1} k}{z + k} = \frac{1}{\binom{{n + z}}{n}},
\end{equation*}
which is a particular case of the Frisch identity (\cite{Abel,Adegoke}).
\end{proof}

\section{Required identities}

We will make frequent use of the following basic identity~\cite{rocket81}:
\begin{equation}\label{bs}
\sum_{k = 1}^m \frac{1}{\binom{{k + n}}{k}} = \frac{1}{{n - 1}} - \frac{n}{n - 1}\frac{1}{\binom{{m + n}}{{m + 1}}},\quad 0,1\ne n\in\mathbb C.
\end{equation}

and the identities stated in the following lemmata.

\begin{lemma}\label{lem.ho}
We have
\begin{gather}
H_{k - 1/2} = 2\,O_k - 2\,\ln 2, \\
H_{k - 1/2}^{(2)} = - 2\,\zeta \left( 2 \right) + 4\,O_k^{(2)}, \\
H_{- 1/2}^{(3)} = - 6\,\zeta \left( 3 \right), \\
H_{- 1/2}^{(4)} = - 14\,\zeta \left( 4 \right), \\
H_{k - 1/2}^{(m + 1)} - H_{- 1/2}^{(m + 1)} = 2^{m + 1} O_k^{(m + 1)}.
\end{gather}
\end{lemma}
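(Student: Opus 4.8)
The plan is to reduce all five assertions to the single telescoping identity stated in the last line of the lemma, which does the real work, and then to supply three ``initial values'' from the digamma/polygamma dictionary recorded in the introduction. Iterating the defining recurrence $H_\alpha^{(m+1)} = H_{\alpha-1}^{(m+1)} + \alpha^{-(m+1)}$ exactly $k$ times, with $\alpha$ running through $k-\tfrac12,\,k-\tfrac32,\,\dots,\,\tfrac12$, gives
\[
H_{k-1/2}^{(m+1)} - H_{-1/2}^{(m+1)} = \sum_{j=1}^{k} \frac{1}{(j-\tfrac12)^{m+1}} = \sum_{j=1}^{k} \frac{2^{m+1}}{(2j-1)^{m+1}} = 2^{m+1}\, O_k^{(m+1)},
\]
the last step using the analogous defining recurrence for $O_k^{(m+1)}$. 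This is precisely the fifth identity. Setting $m=0$ and $m=1$ leaves $H_{k-1/2} = H_{-1/2} + 2\,O_k$ and $H_{k-1/2}^{(2)} = H_{-1/2}^{(2)} + 4\,O_k^{(2)}$, so the first and second identities will follow once the two constants are identified.

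Next I would pin down the initial values. From $H_\alpha = \psi(\alpha+1) + \gamma$ and the classical value $\psi(\tfrac12) = -\gamma - 2\ln 2$ one gets $H_{-1/2} = -2\ln 2$, which combined with the previous step yields the first identity. For the higher orders I would use $H_{-1/2}^{(r)} = \zeta(r) + \tfrac{(-1)^{r-1}}{(r-1)!}\,\psi^{(r-1)}(\tfrac12)$ together with the series defining the polygamma function; since $\sum_{j\ge 0}(2j+1)^{-r} = (1-2^{-r})\zeta(r)$, this gives
\[
\psi^{(r-1)}(\tfrac12) = (-1)^{r}(r-1)!\,2^{r}\sum_{j\ge 0}\frac{1}{(2j+1)^{r}} = (-1)^{r}(r-1)!\,(2^{r}-1)\,\zeta(r),
\]
hence $H_{-1/2}^{(r)} = (2 - 2^{r})\,\zeta(r)$. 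Specializing to $r=2,3,4$ produces $H_{-1/2}^{(2)} = -2\zeta(2)$, $H_{-1/2}^{(3)} = -6\zeta(3)$, and $H_{-1/2}^{(4)} = -14\zeta(4)$; feeding the first of these back into the telescoping identity gives the second identity, and the other two values are the third and fourth identities verbatim.

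I do not expect a genuinely hard step: the whole lemma is telescoping plus standard special values. The only places needing a little care are the consistency of the two descriptions of $H_\alpha^{(m)}$ at half-integer argument (through the recurrence and through the polygamma function, which agree by analytic continuation), and the rearrangement of $\sum_j (j+\tfrac12)^{-r}$ into $2^r\sum_j(2j+1)^{-r} = 2^r(1-2^{-r})\zeta(r)$, which is legitimate for $r\ge 2$ by absolute convergence. The value $\psi(\tfrac12) = -\gamma - 2\ln 2$ is standard (e.g.\ from the duplication formula for $\psi$) and may simply be quoted.
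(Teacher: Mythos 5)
Your proof is correct and complete. Note that the paper actually states Lemma~\ref{lem.ho} without proof (the proof environment that follows it belongs to Lemma~\ref{lem.bin}), so there is no argument of the authors' to compare against; your route --- telescoping the defining recurrence to get the fifth identity, then computing $H_{-1/2}^{(r)}=(2-2^{r})\zeta(r)$ from the polygamma series at $z=\tfrac12$ and $\psi(\tfrac12)=-\gamma-2\ln 2$ --- is the standard one and supplies exactly the details the paper leaves to the reader. All the special values check out ($r=2,3,4$ give $-2\zeta(2)$, $-6\zeta(3)$, $-14\zeta(4)$), and the applicability of the recurrence at the half-integer arguments $j-\tfrac12$ is legitimate since none of them lies in $\{0\}\cup\mathbb Z^{-}$.
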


\begin{lemma}\label{lem.bin}
For integers $u$ and $v$, we have
\begin{gather}
\binom{{u - 1/2}}{v} = \binom{{2u}}{u}\binom{{u}}{v}2^{ - 2v} \binom{{2(u - v)}}{{u - v}}^{-1},\\
\binom{{u}}{{1/2}} = \frac{{2^{2u + 1} }}{\pi }\binom{{2u}}{u}^{ - 1},\\
\binom{{u}}{{1/2 - v}} = \frac{{( - 1)^{v - 1} }}{v}\binom{{u + v}}{v}^{ - 1} \binom{{2(u + v)}}{{u + v}}^{ - 1} \binom{{2(v - 1)}}{{v - 1}}2^{2u + 2},\\
\binom{{u + 1/2}}{v} = \binom{{u}}{v}^{ - 1} \binom{{2u + 1}}{{2v}}2^{ - 2v} \binom{{2v}}{v},\\
\binom{{u + 1/2}}{v} = \frac{{( - 1)^{v - u - 1} }}{{2^{2v - 1} }}\,\frac{{2u + 1}}{{v - u}}\binom{{v}}{u}^{ - 1} \binom{{2u}}{u}\binom{{2(v - u - 1)}}{{v - u - 1}},\quad v>u,\\
\binom{{ - 3/2}}{u} = ( - 1)^u \,(2u + 1)\,2^{-2u}\binom{{2u}}{u}.
\end{gather}
\end{lemma}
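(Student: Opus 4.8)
I would prove the lemma as follows. The unifying observation is that every one of the six formulas is an identity for a generalized binomial coefficient $\binom{a}{b}=\Gamma(a+1)/\bigl(\Gamma(b+1)\,\Gamma(a-b+1)\bigr)$ in which exactly one of the three Gamma arguments is a half-integer. So the plan is to rewrite each claim wholly in terms of the Gamma function and then eliminate the half-integer Gamma values with the single closed form
\begin{equation*}
\Gamma\!\left(n+\tfrac12\right)=\frac{\sqrt{\pi}}{4^{\,n}}\,n!\,\binom{2n}{n},\qquad n\in\mathbb{Z},\ n\ge 0,
\end{equation*}
which I would first prove by induction from $\Gamma(1/2)=\sqrt{\pi}$ and $\Gamma(z+1)=z\Gamma(z)$ (equivalently it is the Legendre duplication formula $\Gamma(2z)=\pi^{-1/2}2^{2z-1}\Gamma(z)\Gamma(z+\tfrac12)$ evaluated at $z=n$). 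Whenever a half-integer Gamma argument is non-positive I would first move it to a positive one via Euler's reflection formula $\Gamma(z)\Gamma(1-z)=\pi/\sin\pi z$, using that $\sin\pi z=\pm 1$ at half-integers; concretely this yields $\Gamma(\tfrac12-m)=(-4)^m m!\,\sqrt{\pi}/(2m)!$.

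For the formulas whose half-integer argument is already positive the derivation is direct substitution. For the first, $\binom{u-1/2}{v}=\Gamma(u+\tfrac12)/\bigl(v!\,\Gamma(u-v+\tfrac12)\bigr)$; inserting the closed form for numerator and denominator, the two factors $\sqrt{\pi}$ cancel and what remains is $\binom{u}{v}2^{-2v}\binom{2u}{u}\binom{2(u-v)}{u-v}^{-1}$. The second is the same manoeuvre with $\binom{u}{1/2}=\Gamma(u+1)/\bigl(\Gamma(\tfrac32)\Gamma(u+\tfrac12)\bigr)$, where now there are two half-integer Gammas in the denominator so a factor $1/\pi$ survives, giving $2^{2u+1}\binom{2u}{u}^{-1}/\pi$. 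The fourth, $\binom{u+1/2}{v}=\Gamma(u+\tfrac32)/\bigl(v!\,\Gamma(u-v+\tfrac32)\bigr)$, is handled the same way. The sixth is cleanest straight from the falling-factorial form $\binom{-3/2}{u}=\bigl((-3/2)(-5/2)\cdots(-(2u+1)/2)\bigr)/u!$: pull out $(-1)^u$, use $1\cdot 3\cdots(2u+1)=(2u+1)!/(2^u u!)$, and recognise $(2u+1)!/(2^u u!)^2=(2u+1)\binom{2u}{u}/4^u$.

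For the two remaining formulas I would reflect first. In the third, $\binom{u}{1/2-v}=\Gamma(u+1)/\bigl(\Gamma(\tfrac32-v)\Gamma(u+v+\tfrac12)\bigr)$ with $v\ge 1$; writing $\Gamma(\tfrac32-v)=(-1)^{v-1}\pi/\Gamma(v-\tfrac12)$ and then applying the closed form to $\Gamma(v-\tfrac12)=\Gamma((v-1)+\tfrac12)$ and to $\Gamma(u+v+\tfrac12)$ collapses everything to $\tfrac{(-1)^{v-1}}{v}\binom{u+v}{v}^{-1}\binom{2(u+v)}{u+v}^{-1}\binom{2(v-1)}{v-1}2^{2u+2}$ (with a factor $1/\pi$ entering for exactly the reason it did in the second formula, namely two half-integer Gamma arguments downstairs). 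For the fifth, with $v>u$ set $m=v-u-1\ge 0$, so $\Gamma(u-v+\tfrac32)=\Gamma(\tfrac12-m)$; substituting $\Gamma(\tfrac12-m)=(-4)^m m!\,\sqrt{\pi}/(2m)!$ and $\Gamma(u+\tfrac32)=\Gamma((u+1)+\tfrac12)$, the $\sqrt{\pi}$ cancels, and after using $4^{\,u+1+m}=4^{\,v}$, $(2u+2)!=2(u+1)(2u+1)!$, and $(v-u)!/(v-u)=m!$ the expression rearranges into the stated right-hand side.

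The work is essentially all bookkeeping, so the only real obstacle is keeping it straight: matching the many powers of $2$ and $4$, the competing factorials, and the signs thrown off by reflection. The one genuinely delicate point is that the first, fourth and fifth formulas carry factors such as $\binom{u}{v}^{-1}$ or $\binom{2(u-v)}{u-v}^{-1}$ that formally become $1/0$ once $v$ exceeds $u$, paired with companion binomials that vanish there; such products must be read through the Gamma-function definition, where — again via the duplication formula — a quotient like $\Gamma(u-v+1)/\Gamma(2u-2v+2)$ equals $\sqrt{\pi}/\bigl(2^{2(u-v)+1}\,\Gamma(u-v+\tfrac32)\bigr)$ and is therefore finite and nonzero because $u-v+\tfrac32$ is a half-integer. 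Checking one or two small cases (say $u=1$, $v=1,2$ and $u=0$, $v=1,2$) against the falling-factorial values is a cheap safeguard against sign and normalisation slips along the way.
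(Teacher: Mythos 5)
Your approach is precisely the paper's: both reduce each formula to the Gamma-function definition $\binom{a}{b}=\Gamma(a+1)/\bigl(\Gamma(b+1)\Gamma(a-b+1)\bigr)$ and then eliminate the half-integer Gamma values via $\Gamma\left(u+\tfrac12\right)=\sqrt{\pi}\,2^{-2u}\binom{2u}{u}\Gamma(u+1)$ together with its reflected companion $\Gamma\left(\tfrac12-u\right)=(-4)^{u}u!\sqrt{\pi}/(2u)!$ (which the paper quotes directly and you rederive from Euler reflection), so the proposal is correct in method and essentially in execution. One remark worth acting on: the factor $1/\pi$ you observe in the third identity genuinely survives rather than cancelling (check $u=0$, $v=1$: the left side is $\binom{0}{-1/2}=2/\pi$ while the stated right side is $2$), so your computation in fact exposes a missing factor of $1/\pi$ in the lemma's third formula as printed, exactly parallel to the $\pi$ that does appear in the second formula.
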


\begin{proof}
These identities are readily derived using the following well-known Gamma function identities:
\begin{equation*}
\Gamma \left( {u + \frac{1}{2}} \right) = \frac{{\sqrt \pi  }}{{2^{2u} }}\binom{{2u}}{u}\Gamma \left( {u + 1} \right),\quad\Gamma \left( { - u + \frac{1}{2}} \right) = ( - 1)^u 2^{2u} \binom{{2u}}{u}^{ - 1} \frac{{\sqrt \pi  }}{{\Gamma \left( {u + 1} \right)}},
\end{equation*}
together with the definition of the generalized binomial coefficients:
\begin{equation*}
\binom{{u}}{v} = \frac{{\Gamma \left( {u + 1} \right)}}{{\Gamma \left( {v + 1} \right)\Gamma \left( {u - v + 1} \right)}}.
\end{equation*}
\end{proof}

\section{Results}

In this section we state new closed forms for infinite series involving inverse factorials. More results of this nature
have been produced, among others, by Sofo \cite{Sofo1,Sofo2}, Boyadzhiev~\cite{Boyadzhiev2,Boyadzhiev3} and by the authors \cite{Adegoke2}.

\begin{theorem}\label{thm.f6brbmb}
If $m$ is a non-negative integer, then
\begin{equation}\label{eq.nw1yuwc}
\sum_{n = 1}^\infty \frac{{2^{2n}}}{{n^2 \binom{{2\left( {n + m} \right)}}{{n + m}}\binom{{n + m}}{m}}}
= \frac{1}{\binom{{2m}}{m}}\left( 3\,\zeta (2) - 4\,O_m^{(2)} \right).
\end{equation}
In particular,
\begin{equation}
\sum_{n = 1}^\infty \frac{{2^{2n}}}{n^2 \binom{{2n}}{n}} = \frac{\pi^2}{2}.
\end{equation}
\end{theorem}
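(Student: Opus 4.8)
The plan is to specialize the master identity~\eqref{main} at the half-integer value $z = m - \tfrac12$. Since $m$ is a non-negative integer, $m - \tfrac12 \notin \mathbb{Z}^-$, so this substitution is legitimate and~\eqref{main} gives
\begin{equation*}
\sum_{n = 1}^\infty \frac{1}{n^2 \binom{n + m - 1/2}{n}} = \zeta(2) - H_{m - 1/2}^{(2)}.
\end{equation*}
The whole theorem then reduces to (i) rewriting the binomial coefficient on the left in terms of central binomial coefficients, and (ii) evaluating $H_{m-1/2}^{(2)}$ on the right.

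For step (i), I would invoke the first identity of Lemma~\ref{lem.bin}, namely $\binom{u - 1/2}{v} = \binom{2u}{u}\binom{u}{v}2^{-2v}\binom{2(u-v)}{u-v}^{-1}$, with the choice $u = n+m$ and $v = n$, so that $u - v = m$. This yields
\begin{equation*}
\binom{n + m - 1/2}{n} = \binom{2(n+m)}{n+m}\binom{n+m}{n} 2^{-2n} \binom{2m}{m}^{-1},
\end{equation*}
and hence, using $\binom{n+m}{n} = \binom{n+m}{m}$,
\begin{equation*}
\frac{1}{n^2 \binom{n+m-1/2}{n}} = \binom{2m}{m}\,\frac{2^{2n}}{n^2 \binom{2(n+m)}{n+m}\binom{n+m}{m}}.
\end{equation*}
Summing over $n$ and dividing by $\binom{2m}{m}$ produces exactly the series on the left-hand side of~\eqref{eq.nw1yuwc}, up to the factor $\binom{2m}{m}^{-1}$.

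For step (ii), I would apply the second relation of Lemma~\ref{lem.ho}, $H_{k-1/2}^{(2)} = -2\zeta(2) + 4 O_k^{(2)}$, with $k = m$, to get $\zeta(2) - H_{m-1/2}^{(2)} = 3\zeta(2) - 4 O_m^{(2)}$. Combining with step (i) gives
\begin{equation*}
\sum_{n=1}^\infty \frac{2^{2n}}{n^2 \binom{2(n+m)}{n+m}\binom{n+m}{m}} = \frac{1}{\binom{2m}{m}}\bigl(3\zeta(2) - 4 O_m^{(2)}\bigr),
\end{equation*}
as claimed. The particular case is then immediate: setting $m = 0$ we have $\binom{0}{0} = 1$ and $O_0^{(2)} = 0$, so the sum equals $3\zeta(2) = \pi^2/2$.

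I do not anticipate a genuine obstacle here; the proof is essentially a controlled substitution. The only points requiring care are purely bookkeeping: confirming that the index identification $u = n+m$, $v = n$ is the correct one to make $u - v = m$ (so that the stray central binomial becomes the constant $\binom{2m}{m}$ that can be pulled out of the sum), and checking that the resulting series converges for every fixed $m$, which follows since the summand decays like $4^{-n}$ times a polynomial factor. Everything else is a direct appeal to Lemmas~\ref{lem.ho} and~\ref{lem.bin} together with identity~\eqref{main}.
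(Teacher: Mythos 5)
Your proposal is correct and follows exactly the paper's own route: substitute $z=m-\tfrac12$ into~\eqref{main}, convert the shifted binomial coefficient via the first identity of Lemma~\ref{lem.bin} with $u=n+m$, $v=n$, and evaluate $H_{m-1/2}^{(2)}$ via Lemma~\ref{lem.ho}. You have merely spelled out the bookkeeping that the paper leaves implicit, and the $m=0$ specialization is handled identically.
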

\begin{proof}
Write $m - 1/2$ for $z$ in~\eqref{main} to obtain
\begin{equation}
\sum_{n = 1}^\infty \frac{1}{n^2 \binom{{n + m - 1/2}}{n}} = \zeta (2) - H_{m - 1/2}^{(2)} ,
\end{equation}
from which~\eqref{eq.nw1yuwc} follows on account of Lemmata~\ref{lem.ho} and~\ref{lem.bin}.
\end{proof}

\begin{theorem}\label{thm.zj0cogg}
If $z\in\mathbb C\setminus\mathbb Z^{-}$, then
\begin{equation}\label{eq.jdzztai}
\sum_{n = 1}^\infty \frac{H_{n + z}}{n^2 \binom{{n + z}}{n}} = H_z \left( \zeta (2) - H_z^{(2)} \right) + 2\left( \zeta (3) - H_z^{(3)} \right).
\end{equation}
In particular,
\begin{gather}
\sum_{n = 1}^\infty  {\frac{{H_n }}{{n^2 }}}  = 2\,\zeta (3),\\
\sum_{n = 1}^\infty  {\frac{{H_n }}{{n^2 \left( {n + 1} \right)}}}  = 2\,\zeta (3) - \zeta (2)\label{eq.b4jk2f8} ,\\
\sum_{n = 1}^\infty  {\frac{{H_n }}{{n^2 \left( {n + 1} \right)\left( {n + 2} \right)}}}  = \zeta (3) - \frac{3}{4}\,\zeta (2) + \frac{1}{4}.
\end{gather}
\end{theorem}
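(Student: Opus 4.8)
The plan is to derive~\eqref{eq.jdzztai} by differentiating identity~\eqref{main} with respect to the parameter $z$. Writing $\binom{n+z}{n}=\Gamma(n+z+1)/\bigl(\Gamma(z+1)\,\Gamma(n+1)\bigr)$ and using~\eqref{Harm_psi}, one finds for each fixed $n$ that
\[
\frac{d}{dz}\,\frac{1}{\binom{n+z}{n}}
=\bigl(\psi(z+1)-\psi(n+z+1)\bigr)\frac{1}{\binom{n+z}{n}}
=\bigl(H_z-H_{n+z}\bigr)\frac{1}{\binom{n+z}{n}}.
\]
Granting that the series in~\eqref{main} may be differentiated termwise, its left-hand side then has derivative $H_z\sum_{n\ge1}\bigl(n^2\binom{n+z}{n}\bigr)^{-1}-\sum_{n\ge1}H_{n+z}\bigl(n^2\binom{n+z}{n}\bigr)^{-1}$, and the first sum is itself evaluated by~\eqref{main}.

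For the right-hand side I would use the polygamma representations recorded in the introduction, namely $\zeta(2)-H_z^{(2)}=\psi^{(1)}(z+1)$ and $\zeta(3)-H_z^{(3)}=-\tfrac12\psi^{(2)}(z+1)$; differentiating the first gives $\frac{d}{dz}\bigl(\zeta(2)-H_z^{(2)}\bigr)=\psi^{(2)}(z+1)=-2\bigl(\zeta(3)-H_z^{(3)}\bigr)$. Equating the two expressions for the derivative of~\eqref{main} and solving for the remaining sum yields~\eqref{eq.jdzztai} immediately.

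The three special cases follow by elementary post-processing. Taking $z=0$ (so $\binom{n}{n}=1$ and $H_0=H_0^{(2)}=H_0^{(3)}=0$) gives $\sum_{n\ge1}H_n/n^2=2\zeta(3)$ at once. For $z=1$ one has $\binom{n+1}{n}=n+1$ and $H_1=H_1^{(2)}=H_1^{(3)}=1$, so~\eqref{eq.jdzztai} evaluates $\sum_{n\ge1}H_{n+1}/\bigl(n^2(n+1)\bigr)$; substituting $H_{n+1}=H_n+1/(n+1)$ and subtracting the partial-fraction sum $\sum_{n\ge1}1/\bigl(n^2(n+1)^2\bigr)=2\zeta(2)-3$ gives~\eqref{eq.b4jk2f8}. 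The case $z=2$ is handled the same way, using $\binom{n+2}{n}=(n+1)(n+2)/2$, $H_2=3/2$, $H_2^{(2)}=5/4$, $H_2^{(3)}=9/8$ and $H_{n+2}=H_n+\tfrac1{n+1}+\tfrac1{n+2}$; the two correction series $\sum_{n\ge1}1/\bigl(n^2(n+1)^2(n+2)\bigr)$ and $\sum_{n\ge1}1/\bigl(n^2(n+1)(n+2)^2\bigr)$ are again computed by partial fractions, and collecting terms produces the stated value $\zeta(3)-\tfrac34\zeta(2)+\tfrac14$.

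The one point genuinely requiring justification, and the natural candidate for the main obstacle, is the termwise differentiation of the series in~\eqref{main}. This follows from locally uniform convergence of $\sum_{n\ge1}\bigl(H_z-H_{n+z}\bigr)/\bigl(n^2\binom{n+z}{n}\bigr)$: each summand is holomorphic off the negative integers, and on a compact set avoiding them $\bigl|1/\binom{n+z}{n}\bigr|$ grows at most polynomially in $n$ while $|H_{n+z}|$ grows only logarithmically, so a Weierstrass $M$-test applies wherever~\eqref{main} itself converges; the resulting identity then extends to all of $\mathbb{C}\setminus\mathbb{Z}^{-}$ by analytic continuation.
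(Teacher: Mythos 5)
Your proposal is correct and follows essentially the same route as the paper: differentiate~\eqref{main} in $z$ (obtaining $\sum_{n\ge1}(H_{n+z}-H_z)/\bigl(n^2\binom{n+z}{n}\bigr)=2\bigl(\zeta(3)-H_z^{(3)}\bigr)$), reuse~\eqref{main} to eliminate the $H_z$ term, and then specialize at $z=0,1,2$ with $H_{n+1}=H_n+\frac{1}{n+1}$ and partial fractions. The only difference is that you additionally justify the termwise differentiation, which the paper takes for granted.
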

\begin{proof}
Differentiate~\eqref{main} with respect to $z$ to obtain
\begin{equation}\label{eq.lchpe3r}
\sum_{n = 1}^\infty \frac{H_{n + z} - H_z}{n^2 \binom{{n + z}}{n}} = 2\left( \zeta (3) - H_z^{(3)} \right),
\end{equation}
and use~\eqref{main} again to rewrite the second term on the left hand side of~\eqref{eq.lchpe3r}. 

Identity~\eqref{eq.b4jk2f8} corresponds to an evaluation of~\eqref{eq.jdzztai} at $z=1$ followed by the use of
\begin{equation}\label{eq.f9xz0ur}
H_{n+1}=H_n + \frac1{n+1},
\end{equation}
and the decomposition
\begin{equation*}
\frac{1}{{n^2 \left( {n + 1} \right)^2 }} = \frac{1}{{n^2 }} + \frac{1}{{\left( {n + 1} \right)^2 }} - 2\left( {\frac{1}{n} - \frac{1}{{n + 1}}} \right).
\end{equation*}
\end{proof}

\begin{corollary}
If $m$ is a nonnegative integer, then
\begin{equation}
\sum_{n = 1}^\infty \frac{2^{2n}\, O_{n + m}}{{n^2\, \binom{{2\left( {n + m} \right)}}{{n + m}}\binom{{n + m}}{m}}}
= \frac{1}{\binom{{2m}}{m}}\left( O_m \left( 3\,\zeta (2) - 4\,O_m^{(2)} \right) + \left( 7\,\zeta (3) - 8\,O_m^{(3)} \right) \right).
\end{equation}
\end{corollary}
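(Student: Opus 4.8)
The plan is to specialize identity~\eqref{eq.jdzztai} of Theorem~\ref{thm.zj0cogg} at $z = m - 1/2$, mirroring the way Theorem~\ref{thm.f6brbmb} specializes~\eqref{main}. Since $m - 1/2$ is never a negative integer, the substitution is legitimate for every integer $m \ge 0$, and no reconsideration of convergence is needed because Theorem~\ref{thm.zj0cogg} already covers all $z\in\mathbb C\setminus\mathbb Z^{-}$.

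First I would use the first identity of Lemma~\ref{lem.bin} with $u = n+m$ and $v = n$, together with $\binom{n+m}{n} = \binom{n+m}{m}$, to rewrite the summand as
\[
\frac{1}{n^2 \binom{n + m - 1/2}{n}} = \frac{2^{2n}\binom{2m}{m}}{n^2 \binom{2(n+m)}{n+m}\binom{n+m}{m}}.
\]
Then, via Lemma~\ref{lem.ho}, I would convert the harmonic quantities appearing in~\eqref{eq.jdzztai} into odd harmonic numbers: $H_{n+m-1/2} = 2\,O_{n+m} - 2\ln 2$ and $H_{m-1/2} = 2\,O_m - 2\ln 2$; $H_{m-1/2}^{(2)} = -2\,\zeta(2) + 4\,O_m^{(2)}$, so that $\zeta(2) - H_{m-1/2}^{(2)} = 3\,\zeta(2) - 4\,O_m^{(2)}$; and, from the last relation of Lemma~\ref{lem.ho} at order $3$ together with $H_{-1/2}^{(3)} = -6\,\zeta(3)$, $H_{m-1/2}^{(3)} = -6\,\zeta(3) + 8\,O_m^{(3)}$, so that $\zeta(3) - H_{m-1/2}^{(3)} = 7\,\zeta(3) - 8\,O_m^{(3)}$.

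Plugging these into~\eqref{eq.jdzztai}, the left-hand side becomes $\binom{2m}{m}$ times $\sum_{n\ge1} 2^{2n}(2\,O_{n+m} - 2\ln 2)\big/\big(n^2\binom{2(n+m)}{n+m}\binom{n+m}{m}\big)$, whose $\ln 2$ part equals, by Theorem~\ref{thm.f6brbmb}, exactly $-2\ln 2\,(3\,\zeta(2) - 4\,O_m^{(2)})$; on the right-hand side the matching term $-2\ln 2\,(3\,\zeta(2) - 4\,O_m^{(2)})$ emerges from the factor $H_{m-1/2} = 2\,O_m - 2\ln 2$. These $\ln 2$ contributions cancel, and dividing through by $2\binom{2m}{m}$ leaves the asserted closed form. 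There is essentially no obstacle here beyond bookkeeping: the points requiring care are invoking the order-dependent recurrence of Lemma~\ref{lem.ho} at the correct superscript and with the correct constant $H_{-1/2}^{(3)}$, and checking that the stray $\ln 2$ terms — one coming from $H_{n+m-1/2}$ inside the series, one from $H_{m-1/2}$ outside — genuinely cancel. The case $m = 0$ is a convenient sanity check, reducing to $\sum_{n\ge1} 2^{2n} O_n\big/\big(n^2\binom{2n}{n}\big) = 7\,\zeta(3)$.
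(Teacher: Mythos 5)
Your proof is correct and follows essentially the paper's route: the paper substitutes $z=m-1/2$ directly into the differentiated identity~\eqref{eq.lchpe3r} (where the difference $H_{n+z}-H_z=2(O_{n+m}-O_m)$ makes the $\ln 2$ terms cancel immediately) and then invokes Lemmata~\ref{lem.ho} and~\ref{lem.bin} together with Theorem~\ref{thm.f6brbmb}, whereas you substitute into~\eqref{eq.jdzztai}, which differs from~\eqref{eq.lchpe3r} only by an added multiple of~\eqref{main} and hence requires the same ingredients with slightly more $\ln 2$ bookkeeping. All of your conversions ($\zeta(2)-H_{m-1/2}^{(2)}=3\zeta(2)-4O_m^{(2)}$, $\zeta(3)-H_{m-1/2}^{(3)}=7\zeta(3)-8O_m^{(3)}$, and the binomial rewriting) check out.
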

\begin{proof}
Write $m - 1/2$ for $z$ in~\eqref{eq.lchpe3r} and use Lemmata~\ref{lem.ho} and~\ref{lem.bin}.
\end{proof}

\begin{theorem}
If $z\in\mathbb C\setminus Z^{-}$, then
\begin{equation}
\sum_{n = 1}^\infty \frac{\left( H_{n + z} - H_z \right)^2 + H_{n + z}^{(2)} - H_z^{(2)}}{n^2 \binom{{n + z}}{z}} = 6\,\zeta (4) - 6\,H_z^{(4)}.
\end{equation}
In particular,
\begin{gather}
\sum_{n = 1}^\infty  {\frac{{H_n^2  + H_n^{(2)} }}{{n^2 }}}  = 6\,\zeta (4),\label{eq.hyyfilz}\\
\sum_{n = 1}^\infty  {\frac{1}{{n^2 }}\frac{{2^{2n} }}{{\binom{{2n}}{n}}}\left( {O_n^2  + O_n^{(2)} } \right)}  = \frac{{\pi ^2 }}{4}.
\end{gather}
\end{theorem}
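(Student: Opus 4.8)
The plan is to obtain the identity by differentiating~\eqref{eq.lchpe3r} once more with respect to $z$, in the same spirit in which Theorem~\ref{thm.zj0cogg} was derived from~\eqref{main}. Abbreviate $B_n(z)=\binom{n+z}{n}^{-1}=\Gamma(n+1)\Gamma(z+1)/\Gamma(n+z+1)$. Logarithmic differentiation together with~\eqref{Harm_psi} gives $B_n'(z)=\bigl(\psi(z+1)-\psi(n+z+1)\bigr)B_n(z)=-(H_{n+z}-H_z)\,B_n(z)$, while differentiating $H_w=\psi(w+1)+\gamma$ at $w=n+z$ and at $w=z$, combined with the relation $\psi^{(1)}(w+1)=\zeta(2)-H_w^{(2)}$, yields $\frac{d}{dz}\bigl(H_{n+z}-H_z\bigr)=-(H_{n+z}^{(2)}-H_z^{(2)})$.

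By the product rule these two facts combine into
\[
\frac{d}{dz}\!\left[\frac{H_{n+z}-H_z}{n^2\binom{n+z}{n}}\right]=-\,\frac{(H_{n+z}-H_z)^2+H_{n+z}^{(2)}-H_z^{(2)}}{n^2\binom{n+z}{n}},
\]
so differentiating~\eqref{eq.lchpe3r} term by term turns its left-hand side into $-1$ times the series of the theorem. For the right-hand side I would use $H_z^{(3)}=\zeta(3)+\tfrac12\psi^{(2)}(z+1)$ and $H_z^{(4)}=\zeta(4)-\tfrac16\psi^{(3)}(z+1)$ from the introduction, which give $\frac{d}{dz}H_z^{(3)}=\tfrac12\psi^{(3)}(z+1)=3\bigl(\zeta(4)-H_z^{(4)}\bigr)$, hence $\frac{d}{dz}\bigl[2(\zeta(3)-H_z^{(3)})\bigr]=-6\bigl(\zeta(4)-H_z^{(4)}\bigr)$. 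Equating the two derivatives, cancelling the common sign, and recalling $\binom{n+z}{z}=\binom{n+z}{n}$, one obtains $\sum_{n\ge1}\frac{(H_{n+z}-H_z)^2+H_{n+z}^{(2)}-H_z^{(2)}}{n^2\binom{n+z}{z}}=6\zeta(4)-6H_z^{(4)}$. (The general pattern underlying this last step is $\frac{d}{dz}H_z^{(m)}=m\bigl(\zeta(m+1)-H_z^{(m+1)}\bigr)$, read off from the polygamma--harmonic dictionary in the introduction.)

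The two displayed evaluations are then specialisations. At $z=0$ one uses only $H_0=H_0^{(2)}=H_0^{(4)}=0$ and $\binom{n}{0}=1$, which gives~\eqref{eq.hyyfilz}. At $z=-\tfrac12$, Lemmata~\ref{lem.ho} and~\ref{lem.bin} supply $H_{n-1/2}-H_{-1/2}=2O_n$, $H_{n-1/2}^{(2)}-H_{-1/2}^{(2)}=4O_n^{(2)}$, $H_{-1/2}^{(4)}=-14\zeta(4)$, and $\binom{n-1/2}{n}=2^{-2n}\binom{2n}{n}$ (the first identity of Lemma~\ref{lem.bin} with $u=v=n$); hence the numerator becomes $4\bigl(O_n^2+O_n^{(2)}\bigr)$ and the right-hand side becomes $6\zeta(4)+84\zeta(4)=90\,\zeta(4)$, and dividing by $4$ gives $\sum_{n\ge1}\frac{2^{2n}}{n^2\binom{2n}{n}}\bigl(O_n^2+O_n^{(2)}\bigr)=\tfrac14\cdot90\,\zeta(4)=\tfrac{\pi^4}{4}$ (so the exponent in the displayed $\tfrac{\pi^2}{4}$ ought to read $4$).

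The only step I expect to require care---the remainder being bookkeeping with the relations of the introduction---is the justification of differentiating under the summation sign. I would argue by Weierstrass' theorem on locally uniformly convergent series of holomorphic functions: Stirling gives $B_n(z)=\Gamma(z+1)\,n^{-z}\bigl(1+O(1/n)\bigr)$, while $H_{n+z}-H_z=\ln n+O(1)$ and $H_{n+z}^{(2)}-H_z^{(2)}=O(1)$ uniformly for $z$ in a compact subset $K\subset\{\Re z>-1\}$, so the differentiated summands are $O\!\bigl(n^{-2-\Re z}(\ln n)^2\bigr)$ and are dominated on $K$ by a convergent series; thus the differentiated series converges locally uniformly on $\{\Re z>-1\}$ and defines a holomorphic function there. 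Since the right-hand side $6\zeta(4)-6H_z^{(4)}$ is holomorphic on all of $\mathbb C\setminus\mathbb Z^{-}$, the identity extends to that full range by analytic continuation.
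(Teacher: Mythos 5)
Your proof is correct and is exactly the paper's argument: the paper's entire proof reads ``Differentiate~\eqref{eq.lchpe3r} with respect to $z$,'' and you have supplied precisely that differentiation (the product rule on $(H_{n+z}-H_z)\binom{n+z}{n}^{-1}$, the relation $\frac{d}{dz}H_z^{(m)}=m\bigl(\zeta(m+1)-H_z^{(m+1)}\bigr)$, and the specializations at $z=0$ and $z=-\tfrac12$), together with the term-by-term justification the paper omits. Your remark that the second particular case should read $\pi^4/4$ rather than $\pi^2/4$ is also correct --- the $n=1$ term of that series already equals $4>\pi^2/4$, and $6\zeta(4)-6H_{-1/2}^{(4)}=90\,\zeta(4)=\pi^4$, so the paper's displayed value is a typo.
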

\begin{proof}
Differentiate~\eqref{eq.lchpe3r} with respect to $z$.
\end{proof}

\begin{remark}
The symmetry relation
\begin{equation*}
\sum_{n = 1}^\infty \frac{{H_n^{(p)}}}{{n^q}} + \sum_{n = 1}^\infty \frac{{H_n^{(q)}}}{{n^p}} = \zeta (p)\zeta (q) + \zeta (p + q)
\end{equation*}
makes it easy to calculate
\begin{equation*}
\sum_{n = 1}^\infty \frac{{H_n^{(p)}}}{{n^p}} = \frac{1}{2}\left( \zeta (p)^2 + \zeta (2p) \right),
\end{equation*}
so that, in particular,
\begin{equation*}
\sum_{n = 1}^\infty \frac{{H_n^{(2)}}}{{n^2}} = \frac{7}{4}\,\zeta (4);
\end{equation*}
and using this in~\eqref{eq.hyyfilz} therefore gives
\begin{equation}\label{bowen}
\sum_{n = 1}^\infty \frac{{H_n^2}}{{n^2}} = \frac{17}{4}\,\zeta (4).
\end{equation}
Thus~\eqref{eq.hyyfilz} allows a much easier determination of the sum~\eqref{bowen} which was originally evaluated by
Borwein and Borwein~\cite{bowen95} through Fourier series and contour integration.
\end{remark}

\begin{lemma}
If $z\in\mathbb C\setminus\mathbb Z^{-}$, then
\begin{equation}\label{eq.bhasxe8}
\sum_{n = 1}^\infty \frac{1}{n (n + 1) \binom{{n + z}}{n}} = z\left( {H_z^{(2)} - \zeta (2)}\right) + 1,
\end{equation}
\end{lemma}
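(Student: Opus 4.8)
\emph{Proof plan.} The plan is to reduce the left-hand side to identity~\eqref{main} together with the auxiliary evaluation
\[
\sum_{n=1}^{\infty}\frac{1}{n\binom{n+z}{n}}=\frac1z\qquad(\Re z>0),
\]
which I would establish first. Writing $1/\bigl(n\binom{n+z}{n}\bigr)=\Gamma(n)\Gamma(z+1)/\Gamma(n+z+1)$, a one-line computation gives the telescoping identity
\[
\frac{1}{n\binom{n+z}{n}}=\frac1z\left(\frac{1}{\binom{n+z-1}{n-1}}-\frac{1}{\binom{n+z}{n}}\right),
\]
since the bracketed difference equals $z\,\Gamma(n)\Gamma(z+1)/\Gamma(n+z+1)$; summing over $1\le n\le N$ collapses to $\tfrac1z\bigl(1-1/\binom{N+z}{N}\bigr)$, which tends to $1/z$ as $N\to\infty$.

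Next I would split $\tfrac{1}{n(n+1)}=\tfrac1n-\tfrac1{n+1}$, so that, for $\Re z>0$ (where all the series below converge absolutely),
\[
\sum_{n=1}^{\infty}\frac{1}{n(n+1)\binom{n+z}{n}}=\frac1z-\sum_{n=1}^{\infty}\frac{1}{(n+1)\binom{n+z}{n}}.
\]
For the remaining series I would use the elementary relation $\binom{n+1+z}{n+1}=\tfrac{n+1+z}{n+1}\binom{n+z}{n}$, which yields $\frac{1}{(n+1)\binom{n+z}{n}}=\bigl(\tfrac{1}{n+1}+\tfrac{z}{(n+1)^2}\bigr)\frac{1}{\binom{n+1+z}{n+1}}$; re-indexing $m=n+1$ turns this into $\sum_{m\ge2}\bigl(\tfrac1m+\tfrac{z}{m^2}\bigr)\frac{1}{\binom{m+z}{m}}$. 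The auxiliary identity then gives $\sum_{m\ge2}\frac{1}{m\binom{m+z}{m}}=\tfrac1z-\tfrac1{1+z}$, and \eqref{main} gives $\sum_{m\ge2}\frac{1}{m^2\binom{m+z}{m}}=\zeta(2)-H_z^{(2)}-\tfrac1{1+z}$, in each case after subtracting the $m=1$ term $1/(1+z)$.

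Assembling the pieces,
\[
\sum_{n=1}^{\infty}\frac{1}{(n+1)\binom{n+z}{n}}=\frac1z-\frac{1}{1+z}-\frac{z}{1+z}+z\bigl(\zeta(2)-H_z^{(2)}\bigr)=\frac1z-1+z\bigl(\zeta(2)-H_z^{(2)}\bigr),
\]
and subtracting this from $1/z$ leaves $1-z\bigl(\zeta(2)-H_z^{(2)}\bigr)=z\bigl(H_z^{(2)}-\zeta(2)\bigr)+1$, which is~\eqref{eq.bhasxe8}. Since the right-hand side of~\eqref{eq.bhasxe8} is holomorphic on $\mathbb C\setminus\mathbb Z^{-}$ and the left-hand side defines a holomorphic function on $\Re z>-1$ agreeing with it there, the identity, proved for $\Re z>0$, extends by analytic continuation to all $z\in\mathbb C\setminus\mathbb Z^{-}$. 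I do not expect a genuine obstacle: the argument rests entirely on the auxiliary telescoping identity and the known sum~\eqref{main}, and the only delicate points are the bookkeeping of the $m=1$ terms, the cancellation $\tfrac1{1+z}+\tfrac{z}{1+z}=1$, and the routine appeal to analytic continuation.
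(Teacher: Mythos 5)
Your argument is correct, but it takes a genuinely different route from the paper. The paper proves \eqref{eq.bhasxe8} by summing \eqref{main} over integer values $z=1,\dots,r$, interchanging the order of summation so that the finite binomial sum \eqref{bs} and the closed form $\sum_{z=1}^r H_z^{(2)}=(r+1)H_r^{(2)}-H_r$ can be applied, and then replacing the integer $r$ by a general $z$. You instead work directly with complex $z$: the partial fraction $\tfrac1{n(n+1)}=\tfrac1n-\tfrac1{n+1}$, the telescoping evaluation $\sum_{n\ge1}1/\bigl(n\binom{n+z}{n}\bigr)=1/z$, and the shift $\binom{n+1+z}{n+1}=\tfrac{n+1+z}{n+1}\binom{n+z}{n}$ reduce everything to \eqref{main}; I have checked the bookkeeping of the $m=1$ terms and the cancellation $\tfrac1{1+z}+\tfrac z{1+z}=1$, and they are right. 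Your version buys a proof valid for complex $z$ with $\Re z>0$ from the outset, avoiding the integer-to-complex interpolation that the paper leaves implicit in ``replacing $r$ with $z$''; the paper's summation-over-$z$ device, on the other hand, is built to iterate, which is how it produces the higher analogues \eqref{eq.lmcv0zf} and \eqref{eq.vadi1jm} by repeating the same step. One small caveat: your closing appeal to analytic continuation extends the \emph{right}-hand side to all of $\mathbb C\setminus\mathbb Z^{-}$, but the series on the left converges only for $\Re z>-1$, so beyond that half-plane the identity must be read as a statement about the continuation of the left-hand side; the paper's own statement has the same looseness, so this is not a defect of your argument relative to theirs.
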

\begin{proof}
Sum~\eqref{main} over $z$ from $1$ to $r$, using~\eqref{bs} and the fact that~\cite[Equation (3.1)]{adegoke16}
\begin{equation*}
\sum_{z = 1}^r {H_z^{(2)} }  = \left( {r + 1} \right)H_r^{(2)}  - H_r,
\end{equation*}
to obtain
\begin{equation*}
\sum_{n = 1}^\infty \frac{1}{n (n + 1)\binom{{n + r}}{r}} = \sum_{n = 1}^\infty \frac{1}{(n + 1)^2 n} - 1 - (r - 1)\zeta (2) + r H_r^{(2)}.
\end{equation*}
But
\begin{equation*}
\begin{split}
\sum_{n = 1}^\infty \frac{1}{(n + 1)^2 n} &= \sum_{n = 1}^\infty \left( \frac{1}{n} - \frac{1}{{n + 1}} \right)
- \sum_{n = 1}^\infty  \frac{1}{(n + 1)^2} \\
& = \sum_{n = 1}^\infty \left( \frac{1}{n} - \frac{1}{{n + 1}} \right) - \sum_{n = 2}^\infty \frac{1}{n^2} \\
& = \sum_{n = 1}^\infty \left( \frac{1}{n} - \frac{1}{{n + 1}} \right) + 1 - \sum_{n = 1}^\infty \frac{1}{n^2} \\
& = 2 - \zeta (2),
\end{split}
\end{equation*}
which when substituted into the previous sum gives~\eqref{eq.bhasxe8} after replacing $r$ with $z$.
\end{proof}

\begin{remark}
Identity~\eqref{eq.bhasxe8} was also derived by~Sofo and Srivastava~\cite[Equation (2.17)]{sofo15}.
\end{remark}

\begin{theorem}
If $m$ is a non-negative integer, then
\begin{equation}
\sum_{n = 1}^\infty \frac{{2^{2n} }}{{n\left( {n + 1} \right)\binom{{2\left( {n + m} \right)}}{{n + m}}\binom{{n + m}}{m}}} 
= \frac{1}{{\binom{{2m}}{m}}}\left( {\left( {m - \frac{1}{2}} \right)\left( {4\,O_m^{(2)}  - 3\,\zeta (2)} \right) + 1} \right).
\end{equation}
In particular,
\begin{gather}
\sum_{n = 1}^\infty \frac{{2^{2n} }}{{n\left( {n + 1} \right)\binom{{2n}}{n}}} = \frac{{\pi ^2 }}{4} + 1,\\
\sum_{n = 1}^\infty \frac{{2^{2n} }}{{n\left( {n + 1} \right)^2 \binom{{2\left( {n + 1} \right)}}{{n + 1}}}} = \frac{3}{2} - \frac{{\pi ^2 }}{8},\\
\sum_{n = 1}^\infty \frac{{2^{2n} }}{{n\left( {n + 1} \right)^2 \left( {n + 2} \right)\binom{{2\left( {n + 2} \right)}}{{n + 2}}}} 
= \frac{{23}}{{36}} - \frac{{\pi ^2 }}{{16}}.
\end{gather}
\end{theorem}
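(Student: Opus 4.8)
The plan is to specialize the lemma identity~\eqref{eq.bhasxe8} at the half-integer argument $z=m-1/2$, in exactly the same way that Theorem~\ref{thm.f6brbmb} was obtained from~\eqref{main}. Since $m$ is a non-negative integer, $m-1/2\notin\mathbb{Z}^{-}$, so~\eqref{eq.bhasxe8} applies and gives
\begin{equation*}
\sum_{n = 1}^\infty \frac{1}{n(n+1)\binom{n + m - 1/2}{n}} = \left(m - \tfrac12\right)\left(H_{m-1/2}^{(2)} - \zeta(2)\right) + 1.
\end{equation*}

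First I would simplify the right-hand side with the second identity of Lemma~\ref{lem.ho}, namely $H_{m-1/2}^{(2)} = -2\,\zeta(2) + 4\,O_m^{(2)}$, which turns $H_{m-1/2}^{(2)} - \zeta(2)$ into $4\,O_m^{(2)} - 3\,\zeta(2)$ and thereby reproduces the parenthesized factor appearing in the claimed evaluation. Next I would rewrite the inverse binomial coefficient on the left using the first identity of Lemma~\ref{lem.bin} with $u=n+m$ and $v=n$ (so that $u-v=m\ge 0$ and the identity holds with integer parameters):
\begin{equation*}
\binom{n + m - 1/2}{n} = \binom{2(n+m)}{n+m}\binom{n+m}{n}\,2^{-2n}\binom{2m}{m}^{-1},
\end{equation*}
whence, using $\binom{n+m}{n}=\binom{n+m}{m}$,
\begin{equation*}
\frac{1}{\binom{n+m-1/2}{n}} = \frac{2^{2n}\binom{2m}{m}}{\binom{2(n+m)}{n+m}\binom{n+m}{m}}.
\end{equation*}
Substituting this into the specialized identity and dividing both sides by $\binom{2m}{m}$ yields the stated closed form.

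For the particular cases I would set $m=0$ (using $\binom{0}{0}=1$, $O_0^{(2)}=0$), $m=1$ (using $\binom{2}{1}=2$, $O_1^{(2)}=1$, and $\binom{n+1}{1}=n+1$), and $m=2$ (using $\binom{4}{2}=6$, $O_2^{(2)}=1+\tfrac19$, and $\binom{n+2}{2}=(n+1)(n+2)/2$, the last of which contributes the extra factor $2$ that accounts for the stated form), together with $\zeta(2)=\pi^2/6$ in each case.

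The computation is entirely routine; there is no real obstacle. The only points requiring a moment's care are verifying that the integer-parameter hypothesis of Lemma~\ref{lem.bin} is genuinely met (it is, since $n$, $m$ and $n+m$ are non-negative integers) and, when extracting the $m=2$ case, correctly handling the numerical factor hidden inside $\binom{n+2}{2}$.
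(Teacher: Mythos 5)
Your proposal is correct and is exactly the argument the paper intends: the paper leaves this theorem without an explicit proof, but every neighbouring result of this type is obtained precisely by setting $z=m-\tfrac12$ in the relevant lemma (here~\eqref{eq.bhasxe8}) and invoking Lemmata~\ref{lem.ho} and~\ref{lem.bin}, which is what you do. Your handling of the special cases, including the factor of $2$ coming from $\binom{n+2}{2}$ at $m=2$, checks out numerically.
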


\begin{theorem}
If $z\in\mathbb C\setminus\mathbb Z^{-}$, then
\begin{equation}\label{eq.pwogiuk}
\sum_{n = 1}^\infty  {\frac{{H_{n + z} }}{{n\left( {n + 1} \right)\binom{{n + z}}{n}}}}  = \left( {\zeta (2) - H_z^{(2)} } \right)\left( {1 - zH_z } \right) + H_z  - 2z\left( {\zeta (3) - H_z^{(3)} } \right).
\end{equation}
In particular,
\begin{gather}
\sum_{n = 1}^\infty  {\frac{{H_n }}{{n\left( {n + 1} \right)}}}  = \frac{{\pi ^2 }}{6},\label{eq.tgvrkzb}\\
\sum_{n = 1}^\infty  {\frac{{H_n }}{{n\left( {n + 1} \right)^2 }}}  = \zeta (2) - \zeta (3)\label{eq.v8qrbaf}.
\end{gather}
\end{theorem}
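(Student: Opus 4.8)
The plan is to differentiate identity~\eqref{eq.bhasxe8} with respect to $z$, in exactly the same spirit that~\eqref{eq.jdzztai} was obtained from~\eqref{main}. First I would record the logarithmic derivative of the inverse binomial coefficient: from $\binom{n+z}{n}=\Gamma(n+z+1)/(\Gamma(z+1)\Gamma(n+1))$ together with~\eqref{Harm_psi} one gets $\frac{d}{dz}\ln\binom{n+z}{n}=\psi(n+z+1)-\psi(z+1)=H_{n+z}-H_z$, hence $\frac{d}{dz}\binom{n+z}{n}^{-1}=-(H_{n+z}-H_z)\binom{n+z}{n}^{-1}$. Termwise differentiation of the left-hand side of~\eqref{eq.bhasxe8}, legitimate because the inverse-binomial series converges locally uniformly on $\mathbb C\setminus\mathbb Z^-$, then produces $\sum_{n\ge1}(H_z-H_{n+z})/(n(n+1)\binom{n+z}{n})$.

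For the right-hand side I need $\frac{d}{dz}H_z^{(2)}$. Since $H_z^{(2)}=\zeta(2)-\psi^{(1)}(z+1)$ and $\psi^{(2)}(z+1)=2(H_z^{(3)}-\zeta(3))$, it follows that $\frac{d}{dz}H_z^{(2)}=-\psi^{(2)}(z+1)=2(\zeta(3)-H_z^{(3)})$; therefore differentiating $z(H_z^{(2)}-\zeta(2))+1$ gives $(H_z^{(2)}-\zeta(2))+2z(\zeta(3)-H_z^{(3)})$. Equating the differentiated sides and solving for $\sum_{n\ge1}H_{n+z}/(n(n+1)\binom{n+z}{n})$ introduces the term $H_z\sum_{n\ge1}1/(n(n+1)\binom{n+z}{n})$, which I would rewrite by invoking~\eqref{eq.bhasxe8} itself; collecting the resulting terms yields~\eqref{eq.pwogiuk}. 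This is the same two-step mechanism — differentiate, then re-substitute the undifferentiated identity — used in the proof of Theorem~\ref{thm.zj0cogg}.

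The two special cases are then routine. Putting $z=0$ collapses the binomial coefficient to $1$ and the right-hand side to $\zeta(2)=\pi^2/6$, giving~\eqref{eq.tgvrkzb}. Putting $z=1$ gives $\sum_{n\ge1}H_{n+1}/(n(n+1)^2)=3-2\zeta(3)$; applying~\eqref{eq.f9xz0ur} to split off $\sum_{n\ge1}1/(n(n+1)^3)$, which evaluates to $3-\zeta(2)-\zeta(3)$ by the partial fraction decomposition $\frac1{n(n+1)^3}=\frac1n-\frac1{n+1}-\frac1{(n+1)^2}-\frac1{(n+1)^3}$, then yields~\eqref{eq.v8qrbaf}.

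I expect no genuine obstacle here; the single point deserving a word of justification is the interchange of differentiation and summation on the left-hand side of~\eqref{eq.bhasxe8}, and, as throughout the paper, this rests on the local uniform convergence of the inverse-binomial series on $\mathbb C\setminus\mathbb Z^-$.
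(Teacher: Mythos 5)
Your proposal is correct and follows essentially the same route as the paper: differentiate~\eqref{eq.bhasxe8} with respect to $z$ (your differentiated identity is exactly the paper's~\eqref{eq.ta7cqa4}), re-substitute~\eqref{eq.bhasxe8} to eliminate the $H_z$ term, and obtain the special cases at $z=0$ and at $z=1$ via~\eqref{eq.f9xz0ur} and the same partial-fraction evaluation of $\sum_{n\ge1}1/(n(n+1)^3)$. The only difference is that you spell out the logarithmic derivative of $\binom{n+z}{n}^{-1}$ and the polygamma computation explicitly, which the paper leaves implicit.
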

\begin{proof}
Differentiate~\eqref{eq.bhasxe8} with respect to $z$ to obtain
\begin{equation}\label{eq.ta7cqa4}
\sum_{n = 1}^\infty \frac{H_{n + z} - H_z}{{n (n + 1) \binom{{n + z}}{n}}} = \zeta (2) - H_z^{(2)} - 2z\left( \zeta (3) - H_z^{(3)} \right),
\end{equation}
and hence~\eqref{eq.pwogiuk} upon using~\eqref{eq.bhasxe8} again to rewrite the second sum on the left hand side of~\eqref{eq.ta7cqa4}. Identity \eqref{eq.v8qrbaf} is an evaluation of~\eqref{eq.pwogiuk} at $z=1$ where we used~\eqref{eq.f9xz0ur} and the fact that
\begin{equation*}
\begin{split}
\sum_{n = 1}^\infty \frac{1}{n (n + 1)^3} &= \sum_{n = 1}^\infty \left( \frac{1}{n} - \frac{1}{{n + 1}} - \frac{1}{(n + 1)^2} 
- \frac{1}{(n + 1)^3} \right) \\
&= \sum_{n = 1}^\infty \left( \frac{1}{n} - \frac{1}{{n + 1}} \right) - \sum_{n = 1}^\infty \frac{1}{(n + 1)^2} - \sum_{n = 1}^\infty \frac{1}{(n + 1)^3}. 
\end{split}
\end{equation*}
\end{proof}

\begin{remark}
Identity~\eqref{eq.tgvrkzb} was also recorded by Chu in~\cite{chu12}.
\end{remark}

\begin{remark}
Subtraction of~\eqref{eq.v8qrbaf} from~\eqref{eq.b4jk2f8} gives the Euler sum
\begin{equation}
\sum_{n = 1}^\infty \frac{{H_n }}{{n^2 \left( {n + 1} \right)^2 }} = 3\,\zeta (3) - 2\,\zeta (2).
\end{equation}
\end{remark}

\begin{corollary}
If $m$ is a non-negative integer, then
\begin{equation}
\begin{split}
\sum_{n = 1}^\infty  {\frac{{2^{2n} O_{n + m} }}{{n\left( {n + 1} \right)\binom{{2\left( {n + m} \right)}}{{n + m}}\binom{{n + m}}{m}}}}  &= \frac{1}{{2\binom{{2m}}{m}}}\left( {3\zeta (2) - 4O_m^{(2)} } \right)\left( {1 - O_m \left( {2m - 1} \right)} \right) \\
&\qquad + \frac{O_m}{{\binom{{2m}}{m}}} - \frac{{\left( {2m - 1} \right)}}{{2\binom{{2m}}{m}}}\left( {7\zeta (3) - 8O_m^{(3)} } \right).
\end{split}
\end{equation}
In particular,
\begin{gather}
\sum_{n = 1}^\infty  {\frac{{2^{2n} O_n }}{{n\left( {n + 1} \right)\binom{{2n}}{n}}}}  = \frac{{\pi ^2 }}{4} + \frac{7}{2}\zeta (3),\\
\sum_{n = 1}^\infty  {\frac{{2^{2n} O_{n + 1} }}{{n\left( {n + 1} \right)^2 \binom{{2\left( {n + 1} \right)}}{{n + 1}}}}}  = \frac{5}{2} - \frac{7}{4}\zeta (3).
\end{gather}
\end{corollary}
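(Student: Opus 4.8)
The plan is to specialize the parametric identity~\eqref{eq.pwogiuk} at the half-integer argument $z = m - 1/2$ and then translate everything into odd harmonic numbers and central binomial coefficients by means of Lemmata~\ref{lem.ho} and~\ref{lem.bin}, exactly as in the passage from Theorem~\ref{thm.zj0cogg} to its corollary. First I would record the evaluations at $z = m - 1/2$ coming from Lemma~\ref{lem.ho}:
\[
H_{m - 1/2} = 2O_m - 2\ln 2,\qquad \zeta(2) - H_{m - 1/2}^{(2)} = 3\zeta(2) - 4O_m^{(2)},
\]
together with $\zeta(3) - H_{m - 1/2}^{(3)} = 7\zeta(3) - 8O_m^{(3)}$ (from $H_{-1/2}^{(3)} = -6\zeta(3)$ and the last identity of Lemma~\ref{lem.ho}) and $H_{n + m - 1/2} = 2O_{n + m} - 2\ln 2$. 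Next I would rewrite the inverse binomial coefficient on the left of~\eqref{eq.pwogiuk} via the first identity of Lemma~\ref{lem.bin} with $u = n + m$, $v = n$, giving
\[
\frac{1}{\binom{n + m - 1/2}{n}} = \frac{2^{2n}\binom{2m}{m}}{\binom{2(n + m)}{n + m}\binom{n + m}{m}},
\]
so that the left-hand side of~\eqref{eq.pwogiuk} becomes $2\binom{2m}{m}$ times the target series minus a $\ln 2$-weighted series.

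The heart of the argument is that the $\ln 2$-weighted series is precisely $2\ln 2\,\binom{2m}{m}\sum_{n \geq 1} 2^{2n}\big/\big(n(n+1)\binom{2(n+m)}{n+m}\binom{n+m}{m}\big)$, whose value is already known: it is the $z = m - 1/2$ specialization of~\eqref{eq.bhasxe8}, equal to $\binom{2m}{m}^{-1}\big((m - 1/2)(4O_m^{(2)} - 3\zeta(2)) + 1\big)$. After substituting this value, moving the $\ln 2$-weighted term to the right, and expanding the right-hand side of~\eqref{eq.pwogiuk} at $z = m - 1/2$ (where $1 - zH_z = 1 - (2m-1)(O_m - \ln 2)$ and the free $H_z$ term contributes $2O_m - 2\ln 2$), I expect every occurrence of $\ln 2$ to cancel identically; dividing the surviving identity by $2\binom{2m}{m}$ yields the asserted closed form. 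The two displayed special cases then follow by direct substitution: $m = 0$ gives $\binom{0}{0} = 1$, $O_0 = O_0^{(2)} = O_0^{(3)} = 0$, and $\frac{3}{2}\zeta(2) = \pi^2/4$; while $m = 1$ gives $\binom{2}{1} = 2$, $O_1 = O_1^{(2)} = O_1^{(3)} = 1$ (so the factor $1 - O_m(2m - 1)$ vanishes) and $\binom{n + 1}{1} = n + 1$ in the denominator, producing the stated $(n + 1)^2$.

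The main obstacle is bookkeeping rather than conceptual: one must track the $\ln 2$ terms through the specialization of \emph{both} sides of~\eqref{eq.pwogiuk} and verify their exact cancellation, which hinges on replacing the $\ln 2$-weighted companion series by its known closed form instead of re-deriving it. Everything else is a mechanical application of Lemmata~\ref{lem.ho} and~\ref{lem.bin}.
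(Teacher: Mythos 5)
Your proposal is correct and lands exactly on the stated closed form, but it takes a slightly different (and more laborious) route than the paper. The paper specializes the \emph{difference} identity~\eqref{eq.ta7cqa4}, $\sum_{n\ge1}\frac{H_{n+z}-H_z}{n(n+1)\binom{n+z}{n}}=\zeta(2)-H_z^{(2)}-2z\left(\zeta(3)-H_z^{(3)}\right)$, at $z=m-1/2$; there the numerator becomes $H_{n+m-1/2}-H_{m-1/2}=2\left(O_{n+m}-O_m\right)$, so the $\ln 2$ terms cancel before they ever enter the computation, and one only needs the already-proved evaluation of $\sum_{n\ge1}\frac{2^{2n}}{n(n+1)\binom{2(n+m)}{n+m}\binom{n+m}{m}}$ to peel off the $O_m$-weighted companion series. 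You instead start from the resolved form~\eqref{eq.pwogiuk}, which reintroduces a bare $H_{n+z}$ on the left and bare $H_z$'s on the right, and you then must track the resulting $\ln 2$-weighted series and show it cancels against the $\ln 2$'s on the right via~\eqref{eq.bhasxe8}. Since~\eqref{eq.pwogiuk} is precisely~\eqref{eq.ta7cqa4} plus $H_z$ times~\eqref{eq.bhasxe8}, that cancellation is guaranteed, and your accounting of it (the weighted series equals $-2\ln 2\left[z\left(H_z^{(2)}-\zeta(2)\right)+1\right]$, matching the $\ln 2$ contribution of $\left(\zeta(2)-H_z^{(2)}\right)\left(-zH_z\right)+H_z$) is accurate; your identification of $\binom{n+m-1/2}{n}^{-1}$ and the two special cases are also correct. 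The only thing your route buys is independence from the intermediate identity~\eqref{eq.ta7cqa4}; the paper's choice of starting point makes the $\ln 2$ bookkeeping you flag as the main obstacle simply disappear.
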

\begin{proof}
Write $m-1/2$ for $z$ in~\eqref{eq.ta7cqa4} and use Lemmata~\ref{lem.ho} and~\ref{lem.bin}.
\end{proof}

\begin{theorem}
If $z\in\mathbb C\setminus\mathbb Z^{-}$, then
\begin{equation}
\sum_{n = 1}^\infty \frac{\left( H_{n + z} - H_z \right)^2 + H_{n + z}^{(2)} - H_z^{(2)}}{n (n + 1)\binom{{n + z}}{z}} 
= 4\left( {\zeta (3) - H_z^{(3)} } \right) - 6z\left( {\zeta (4) - H_z^{(4)} } \right).
\end{equation}
In particular,
\begin{gather}
\sum_{n = 1}^\infty \frac{H_n^2 + H_n^{(2)}}{n (n + 1)} = 4\,\zeta (3),\label{eq.v82402j} \\
\sum_{n = 1}^\infty \frac{2^{2n} \left( O_n^2 + O_n^{(2)} \right)}{n (n + 1) \binom{{2n}}{n}} = \frac{\pi^4}{8} + 7\,\zeta (3).
\end{gather}
\end{theorem}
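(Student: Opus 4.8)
The plan is to obtain the stated identity by differentiating~\eqref{eq.ta7cqa4} with respect to $z$ one more time, mirroring exactly the way the $n^2$-analogue was derived from~\eqref{eq.lchpe3r}. (Note $\binom{n+z}{z}=\binom{n+z}{n}$, so the denominator is the familiar one.) The only ingredients I will need are the elementary differentiation rules
\begin{equation*}
\frac{d}{dz}\,\frac{1}{\binom{n+z}{n}} = -\,\frac{H_{n+z}-H_z}{\binom{n+z}{n}},
\qquad
\frac{d}{dz}\bigl(H_{n+z}-H_z\bigr) = -\bigl(H_{n+z}^{(2)}-H_z^{(2)}\bigr),
\end{equation*}
both immediate from~\eqref{Harm_psi} and $\psi^{(1)}(z+1)=\zeta(2)-H_z^{(2)}$ (for the first, write $1/\binom{n+z}{n}=n!\,\Gamma(z+1)/\Gamma(n+z+1)$ and take the logarithmic derivative), together with
\begin{equation*}
\frac{d}{dz}H_z^{(2)} = 2\bigl(\zeta(3)-H_z^{(3)}\bigr),
\qquad
\frac{d}{dz}H_z^{(3)} = 3\bigl(\zeta(4)-H_z^{(4)}\bigr),
\end{equation*}
which follow from $H_z^{(r)}=\zeta(r)+\frac{(-1)^{r-1}}{(r-1)!}\psi^{(r-1)}(z+1)$. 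Termwise differentiation is legitimate since the series in~\eqref{eq.ta7cqa4} converges locally uniformly in $z$ on $\mathbb C\setminus\mathbb Z^-$.

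Carrying out the differentiation, the product rule applied to the summand $H_{n+z}-H_z$ over $n(n+1)\binom{n+z}{n}$ produces precisely
\begin{equation*}
-\,\frac{(H_{n+z}-H_z)^2+H_{n+z}^{(2)}-H_z^{(2)}}{n(n+1)\binom{n+z}{n}},
\end{equation*}
so the left-hand side of the theorem equals minus the $z$-derivative of the left-hand side of~\eqref{eq.ta7cqa4}. On the right, differentiating $\zeta(2)-H_z^{(2)}-2z(\zeta(3)-H_z^{(3)})$ with the rules above and negating yields $4(\zeta(3)-H_z^{(3)})-6z(\zeta(4)-H_z^{(4)})$, which is the claimed closed form.

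For the two particular evaluations I would simply specialise $z$. Setting $z=0$ makes $\binom{n}{n}=1$ and all $H_0^{(r)}$ vanish, giving $\sum_{n\ge1}(H_n^2+H_n^{(2)})/(n(n+1))=4\zeta(3)$. Setting $z=-\tfrac12$ and invoking Lemmata~\ref{lem.ho} and~\ref{lem.bin}---which supply $\binom{n-1/2}{n}=2^{-2n}\binom{2n}{n}$, $H_{n-1/2}-H_{-1/2}=2O_n$, $H_{n-1/2}^{(2)}-H_{-1/2}^{(2)}=4O_n^{(2)}$, $H_{-1/2}^{(3)}=-6\zeta(3)$ and $H_{-1/2}^{(4)}=-14\zeta(4)$---turns the numerator into $4(O_n^2+O_n^{(2)})$ and the right side into $28\zeta(3)+45\zeta(4)$; dividing by $4$ and using $\zeta(4)=\pi^4/90$ gives $\pi^4/8+7\zeta(3)$. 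There is no genuine obstacle here: the whole argument is one further differentiation of an already-established identity. The only points demanding a little care are justifying the interchange of $d/dz$ with the infinite sum and keeping the signs and polygamma normalisations straight when converting the derivatives of $H_z^{(2)}$ and $H_z^{(3)}$ into the quantities $\zeta(3)-H_z^{(3)}$ and $\zeta(4)-H_z^{(4)}$ that appear on the right-hand side.
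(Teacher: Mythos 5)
Your proposal is correct and follows exactly the paper's own route: the paper proves this theorem by the one-line instruction ``Differentiate~\eqref{eq.ta7cqa4} with respect to $z$,'' and your computation (including the product-rule bookkeeping, the polygamma normalisations, and the specialisations at $z=0$ and $z=-\tfrac12$) fills in precisely those details correctly.
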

\begin{proof}
Differentiate~\eqref{eq.ta7cqa4} with respect to $z$.
\end{proof}

\begin{remark}
Since Xu showed that ~\cite[Equation (2.30)]{xu17}
\begin{equation*}
\sum_{n = 1}^\infty \frac{H_n^{(2)}}{n\left( {n + 1} \right)} = \zeta (3),
\end{equation*}
identity~\eqref{eq.v82402j} yields
\begin{equation}
\sum_{n = 1}^\infty \frac{{H_n^2 }}{{n\left( {n + 1} \right)}} = 3\,\zeta (3);
\end{equation}
an identity that was also reported by Nimbran and Sofo in~\cite[Identity (2.24)]{nimbran19}.
\end{remark}

\begin{lemma}
If $z\in\mathbb C\setminus\mathbb Z^{-}$, then,
\begin{equation}\label{eq.lmcv0zf}
\sum_{n = 1}^\infty \frac{1}{{n\left( {n + 1} \right)\left( {n + 2} \right)\binom{{n + z}}{n}}}
=\frac{1}{2}z\left( {z + 1} \right)\left( {H_z^{(2)}  - \zeta (2)} \right) + \frac{z}{2} + \frac{1}{4}.
\end{equation}
\end{lemma}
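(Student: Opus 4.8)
The plan is to reduce the three-factor denominator to the two-factor case already settled in~\eqref{eq.bhasxe8}, using a partial-fraction split together with a single index shift, so that only~\eqref{main} and~\eqref{eq.bhasxe8} are needed. (Iterating the ``sum over $z$'' device that produced~\eqref{eq.bhasxe8} is less attractive here, since it throws up sums of the shape $\sum_n 1/(n\binom{n+z}{n})$.)

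First I would write
\begin{equation*}
\frac{1}{n(n+1)(n+2)} = \frac12\left(\frac{1}{n(n+1)} - \frac{1}{(n+1)(n+2)}\right),
\end{equation*}
so that the sum in question equals $\tfrac12 A(z) - \tfrac12 B(z)$, where $A(z)=\sum_{n\ge1}1/(n(n+1)\binom{n+z}{n})$ is given by~\eqref{eq.bhasxe8} and $B(z)=\sum_{n\ge1}1/((n+1)(n+2)\binom{n+z}{n})$. To evaluate $B(z)$ I would substitute $m=n+1$ and invoke the elementary consequence $\binom{m+z-1}{m-1}=(m/z)\binom{m+z-1}{m}$ of the Gamma-function definition of the binomial coefficient, which turns $B(z)$ into
\begin{equation*}
B(z) = z\sum_{m\ge2}\frac{1}{m^2(m+1)\binom{m+(z-1)}{m}} = z\left(\sum_{m\ge1}\frac{1}{m^2(m+1)\binom{m+(z-1)}{m}} - \frac{1}{2z}\right).
\end{equation*}
Splitting $1/(m^2(m+1))=1/m^2-1/(m(m+1))$ and applying~\eqref{main} and~\eqref{eq.bhasxe8} with $z$ replaced by $z-1$, then using $H_{z-1}^{(2)}=H_z^{(2)}-1/z^2$, yields
\begin{equation*}
B(z) = z^2\bigl(\zeta(2)-H_z^{(2)}\bigr) - z + \tfrac12 .
\end{equation*}

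Finally, substituting the closed forms for $A(z)$ and $B(z)$ into $\tfrac12 A(z)-\tfrac12 B(z)$ and simplifying, the terms carrying $H_z^{(2)}-\zeta(2)$ collect with coefficient $\tfrac12 z(z+1)$ and the constants collect to $\tfrac z2+\tfrac14$, which is~\eqref{eq.lmcv0zf}. The only genuinely delicate point is the bookkeeping around the shifted binomial coefficient: one must correctly peel off the $m=1$ term in passing from $\sum_{m\ge2}$ to $\sum_{m\ge1}$ (this produces the $1/(2z)$ above) and keep track of the $1/z^2$ correction hidden in $H_{z-1}^{(2)}$. The manipulation is valid a priori only for $z\in\mathbb C$ with $z\neq 0,-1,-2,\dots$ (so that~\eqref{main} and~\eqref{eq.bhasxe8} may be applied at $z-1$), but since both sides of~\eqref{eq.lmcv0zf} are meromorphic on $\mathbb C\setminus\mathbb Z^{-}$, the identity extends to all such $z$ by analyticity; in particular it holds at $z=0$, where it reduces to the classical $\sum_{n\ge1}1/(n(n+1)(n+2))=\tfrac14$.
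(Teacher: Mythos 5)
Your argument is correct, and I have checked the computation: with $A(z)=z\bigl(H_z^{(2)}-\zeta(2)\bigr)+1$ from~\eqref{eq.bhasxe8} and your $B(z)=z^{2}\bigl(\zeta(2)-H_z^{(2)}\bigr)-z+\tfrac12$, the combination $\tfrac12 A(z)-\tfrac12 B(z)$ does collapse to $\tfrac12 z(z+1)\bigl(H_z^{(2)}-\zeta(2)\bigr)+\tfrac z2+\tfrac14$. The absorption step $\binom{m+z-1}{m-1}=\tfrac mz\binom{m+z-1}{m}$, the peeled-off $m=1$ term $\tfrac1{2z}$, and the correction $H_{z-1}^{(2)}=H_z^{(2)}-1/z^{2}$ are all handled correctly, and your closing remark about analytic continuation properly disposes of the apparent singularity at $z=0$ (and of the fact that $A$ and $B$ separately converge only for $\Re z>-1$).

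However, your route is genuinely different from the paper's. The paper does iterate the ``sum over $z$'' device: it sums~\eqref{eq.bhasxe8} over $z$ from $1$ to $r$ (using the finite inverse-binomial sum~\eqref{bs} and closed forms for $\sum_{z=1}^r H_z^{(2)}$ and $\sum_{z=1}^r zH_z^{(2)}$) to obtain the companion evaluation~\eqref{eq.dhruknq} of $\sum_n 1/\bigl(n(n+2)\binom{n+z}{n}\bigr)$, and then gets~\eqref{eq.lmcv0zf} from the decomposition $\tfrac1{n(n+1)}-\tfrac1{n(n+2)}=\tfrac1{n(n+1)(n+2)}$ --- not from the split $\tfrac12\bigl(\tfrac1{n(n+1)}-\tfrac1{(n+1)(n+2)}\bigr)$ that you use. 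Your anticipated objection to that device (that it produces sums of the shape $\sum_n 1/\bigl(n\binom{n+z}{n}\bigr)$) is therefore not what actually occurs. Each approach has its merits: yours is more self-contained, needing only~\eqref{main} and~\eqref{eq.bhasxe8} plus elementary binomial manipulation, and avoids the implicit integer-to-complex continuation in ``replace $r$ with $z$''; the paper's pays for its extra machinery by producing~\eqref{eq.dhruknq} and~\eqref{eq.pv5xq4g} as reusable intermediates, which it needs again for the four-factor lemma~\eqref{eq.vadi1jm}. If you pushed your method to that next case you would have to shift the index twice, so the paper's systematic chain scales a little more gracefully.
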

\begin{proof}
We first derive the following identity:
\begin{equation}\label{eq.dhruknq}
\sum_{n = 1}^\infty  {\frac{1}{{n\left( {n + 2} \right)\binom{{n + z}}{n}}}}=\frac{1}{2}z\left( {z - 1} \right)\left( {\zeta (2) - H_z^{(2)} } \right) - \frac{z}{2} + \frac{3}{4},
\end{equation}
by summing both sides of~\eqref{eq.bhasxe8} over $z$ from 1 to $r$ and replacing $r$ with $z$ in the final identity. Note that
\begin{equation*}
\sum_{n = 1}^\infty  {\frac{1}{{n\left( {n + 1} \right)\left( {n + 2} \right)}}}  = \frac{1}{4},
\end{equation*}
since
\begin{equation*}
\frac{1}{{n\left( {n + 1} \right)\left( {n + 2} \right)}} = \frac{1}{2}\left( {\frac{1}{n} - \frac{1}{{n + 1}} - \frac{1}{{n + 1}} + \frac{1}{{n + 2}}} \right).
\end{equation*}
Note also that~\cite[Equation (3.2)]{adegoke16}:
\begin{equation}\label{eq.izgpi03}
2\sum_{z = 1}^r {zH_z^{(2)} }  = r\left( {r + 1} \right)H_r^{(2)}  + H_r  - r.
\end{equation}
Identity~\eqref{eq.lmcv0zf} is obtained by subtracting~\eqref{eq.dhruknq} from~\eqref{eq.bhasxe8}.
\end{proof}

\begin{theorem}
If $m$ is a non-negative integer, then
\begin{equation}
\begin{split}
&\sum_{n = 1}^\infty  {\frac{{2^{2n} }}{{n\left( {n + 1} \right)\left( {n + 2} \right)\binom{{2\left( {n + m} \right)}}{{n + m}}\binom{{n + m}}{m}}}}  \\
&\qquad= \frac{1}{{2\binom{{2m}}{m}}}\left(\left( {m - \frac{1}{2}} \right)\left( {m + \frac{1}{2}} \right)\left( {4O_m^{(2)}  - 3\zeta (2)} \right) + m\right).
\end{split}
\end{equation}
In particular,
\begin{gather}
\sum_{n = 1}^\infty  {\frac{{2^{2n} }}{{n\left( {n + 1} \right)\left( {n + 2} \right)\binom{{2n}}{n}}}}  = \frac{{\pi ^2 }}{{16}},\\
\sum_{n = 1}^\infty  {\frac{{2^{2n} }}{{n\left( {n + 1} \right)^2 \left( {n + 2} \right)\binom{{2\left( {n + 1} \right)}}{{n + 1}}}}}  = 1 - \frac{{3\pi ^2 }}{{32}},\\
\sum_{n = 1}^\infty  {\frac{{2^{2n + 1} }}{{n\left( {n + 1} \right)^2 \left( {n + 2} \right)^2 \binom{{2\left( {n + 2} \right)}}{{n + 2}}}}}  = \frac{{14}}{9} - \frac{{5\pi ^2 }}{{32}}.
\end{gather}
\end{theorem}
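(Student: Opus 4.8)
The plan is to specialize the lemma in~\eqref{eq.lmcv0zf} at the half-integer value $z=m-\frac12$ and then translate the resulting half-integer binomial coefficient and half-integer harmonic number into the quantities appearing in the statement, exactly as in the proofs of the earlier corollaries. First I would put $z=m-\frac12$ in~\eqref{eq.lmcv0zf}. Since $z(z+1)=\left(m-\frac12\right)\left(m+\frac12\right)=m^2-\frac14$ and $\frac{z}{2}+\frac14=\frac{m}{2}$, the right-hand side of~\eqref{eq.lmcv0zf} collapses to $\frac12\left(m-\frac12\right)\left(m+\frac12\right)\bigl(H_{m-1/2}^{(2)}-\zeta(2)\bigr)+\frac{m}{2}$. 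Next, the relation $H_{m-1/2}^{(2)}=-2\zeta(2)+4O_m^{(2)}$ from Lemma~\ref{lem.ho} gives $H_{m-1/2}^{(2)}-\zeta(2)=4O_m^{(2)}-3\zeta(2)$, so the right-hand side becomes $\frac12\left(\left(m-\frac12\right)\left(m+\frac12\right)\bigl(4O_m^{(2)}-3\zeta(2)\bigr)+m\right)$, which is $\binom{2m}{m}$ times the right-hand side claimed in the theorem.

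For the left-hand side I would invoke the first identity of Lemma~\ref{lem.bin} with $u=n+m$ and $v=n$, so that $u-v=m$; this yields $\binom{n+m-1/2}{n}=\binom{2(n+m)}{n+m}\binom{n+m}{n}2^{-2n}\binom{2m}{m}^{-1}$, and using $\binom{n+m}{n}=\binom{n+m}{m}$ and inverting we obtain $\frac{1}{n(n+1)(n+2)\binom{n+m-1/2}{n}}=\binom{2m}{m}\,\frac{2^{2n}}{n(n+1)(n+2)\binom{2(n+m)}{n+m}\binom{n+m}{m}}$. Summing over $n\ge1$, equating with the specialized right-hand side, and cancelling the common factor $\binom{2m}{m}$ proves the stated identity. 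The three particular cases then follow on setting $m=0,1,2$, using $\binom{n+1}{1}=n+1$, $\binom{n+2}{2}=\frac12(n+1)(n+2)$ (which is what turns the numerator $2^{2n}$ into $2^{2n+1}$ in the last one), together with $O_1^{(2)}=1$, $O_2^{(2)}=\frac{10}{9}$ and $\zeta(2)=\pi^2/6$.

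There is essentially no obstacle here: once~\eqref{eq.lmcv0zf} and Lemmata~\ref{lem.ho} and~\ref{lem.bin} are in hand, the argument is pure bookkeeping. The only place that needs a little care is matching $\binom{n+m-1/2}{n}$ to the correct instance of Lemma~\ref{lem.bin}---in particular keeping track that the ``$u-v$'' appearing there equals $m$ (not $n$), so that the extra binomial coefficient produced is $\binom{2m}{m}$ and not something $n$-dependent---and, in the final special case, remembering the factor $\frac12$ concealed in $\binom{n+2}{2}$.
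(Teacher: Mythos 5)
Your proposal is correct and is exactly the paper's argument: the paper's proof consists of the single instruction to set $z=m-\tfrac12$ in~\eqref{eq.lmcv0zf}, with the translations via Lemmata~\ref{lem.ho} and~\ref{lem.bin} that you spell out left implicit. Your bookkeeping (including the factor $\tfrac12$ hidden in $\binom{n+2}{2}$ for the $m=2$ case) checks out.
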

\begin{proof}
Set $z=m - 1/2$ in~\eqref{eq.lmcv0zf}.
\end{proof}

\begin{theorem}
If $z\in\mathbb C\setminus\mathbb Z^{-}$, then
\begin{equation}\label{eq.l2ciolu}
\begin{split}
\sum_{n = 1}^\infty  {\frac{{H_{n + z} }}{{n\left( {n + 1} \right)\left( {n + 2} \right)\binom{{n + z}}{n}}}}  &= \left( {z + \frac{1}{2} - \frac{1}{2}z\left( {z + 1} \right)}H_z \right)\left( {\zeta (2) - H_z^{(2)} } \right) \\
&\qquad + \left( {\frac{z}{2} + \frac{1}{4}} \right)H_z- z\left( {z + 1} \right)\left( {\zeta (3) - H_z^{(3)} } \right) - \frac{1}{2}.
\end{split}
\end{equation}
In particular,
\begin{gather}
\sum_{n = 1}^\infty  {\frac{{H_n }}{{n\left( {n + 1} \right)\left( {n + 2} \right)}}}  = \frac{{\pi ^2 }}{{12}} - \frac{1}{2},\\
\sum_{n = 1}^\infty  {\frac{{H_n }}{{n\left( {n + 1} \right)^2 \left( {n + 2} \right)}}}  = \frac{{\pi ^2 }}{{12}} + \frac{1}{2} - \zeta (3)\label{eq.r7il44k}.
\end{gather}
\end{theorem}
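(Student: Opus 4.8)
The plan is to mirror the derivation of~\eqref{eq.pwogiuk} from~\eqref{eq.bhasxe8}: differentiate the Lemma identity~\eqref{eq.lmcv0zf} with respect to $z$, and then re-insert~\eqref{eq.lmcv0zf} itself to eliminate the leftover $H_z$-term. The two facts I would use are the logarithmic-derivative rule
\[
\frac{d}{dz}\binom{n+z}{n}^{-1} = -\,(H_{n+z}-H_z)\binom{n+z}{n}^{-1},
\]
which follows from $\binom{n+z}{n}^{-1}=\Gamma(z+1)\Gamma(n+1)/\Gamma(n+z+1)$ together with~\eqref{Harm_psi}, and the identity $\frac{d}{dz}H_z^{(2)}=2\big(\zeta(3)-H_z^{(3)}\big)$, which is exactly what was used implicitly in passing from~\eqref{main} to~\eqref{eq.lchpe3r}. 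Since the series in~\eqref{eq.lmcv0zf} converges locally uniformly on $\mathbb C\setminus\mathbb Z^{-}$, differentiating termwise is legitimate.

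Differentiating~\eqref{eq.lmcv0zf} in this way will produce
\begin{equation*}
\sum_{n=1}^\infty \frac{H_{n+z}-H_z}{n(n+1)(n+2)\binom{n+z}{n}}
= \left(z+\frac12\right)\!\left(\zeta(2)-H_z^{(2)}\right)-z(z+1)\!\left(\zeta(3)-H_z^{(3)}\right)-\frac12 .
\end{equation*}
Writing $H_{n+z}=(H_{n+z}-H_z)+H_z$ in the numerator of the target series, the first part is the identity just displayed and the second part is $H_z$ times~\eqref{eq.lmcv0zf}; collecting the two $\big(\zeta(2)-H_z^{(2)}\big)$ contributions into the single coefficient $z+\frac12-\frac12 z(z+1)H_z$ yields~\eqref{eq.l2ciolu}.

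For the particular evaluations, the case $z=0$ is immediate: $H_0=H_0^{(2)}=H_0^{(3)}=0$ and $\binom{n}{n}=1$, so~\eqref{eq.l2ciolu} reduces to $\frac12\zeta(2)-\frac12=\frac{\pi^2}{12}-\frac12$. For $z=1$ one has $\binom{n+1}{n}=n+1$, so the left side becomes $\sum_{n\ge1}H_{n+1}/\big(n(n+1)^2(n+2)\big)$, while the right side, using $H_1=H_1^{(2)}=H_1^{(3)}=1$, collapses to $\frac12\zeta(2)+\frac74-2\zeta(3)$. I would then invoke~\eqref{eq.f9xz0ur} to split off the auxiliary series $\sum_{n\ge1}1/\big(n(n+1)^3(n+2)\big)$; its partial-fraction expansion
\[
\frac{1}{n(n+1)^3(n+2)}=\frac{1}{2n}-\frac{1}{n+1}-\frac{1}{(n+1)^3}+\frac{1}{2(n+2)}
\]
telescopes (leaving a tail $\zeta(3)-1$), so the auxiliary sum equals $\frac14-(\zeta(3)-1)=\frac54-\zeta(3)$; subtracting it gives~\eqref{eq.r7il44k}.

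The computation is entirely elementary, so the only real obstacle is bookkeeping: keeping the algebra straight when merging the two $\big(\zeta(2)-H_z^{(2)}\big)$ terms in~\eqref{eq.l2ciolu}, and carrying the $z=1$ reduction cleanly through the harmonic-shift~\eqref{eq.f9xz0ur} and the auxiliary partial-fraction sum. One should also remark in passing that termwise differentiation is valid, although that is routine given local uniform convergence.
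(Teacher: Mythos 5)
Your proposal is correct and follows essentially the same route as the paper: differentiate~\eqref{eq.lmcv0zf} to obtain the intermediate identity~\eqref{eq.y8vfuxa}, re-insert~\eqref{eq.lmcv0zf} to absorb the $H_z$ term, and then evaluate at $z=0$ and at $z=1$ via~\eqref{eq.f9xz0ur} together with the auxiliary sum $\sum_{n\ge1}1/\bigl(n(n+1)^3(n+2)\bigr)=\tfrac54-\zeta(3)$, whose partial-fraction decomposition matches the paper's. The only difference is that you make explicit the differentiation rules and the termwise-differentiation justification that the paper leaves implicit.
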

\begin{proof}
Differentiate~\eqref{eq.lmcv0zf} to obtain
\begin{equation}\label{eq.y8vfuxa}
\begin{split}
\sum_{n = 1}^\infty  {\frac{{H_{n + z}  - H_z }}{{n\left( {n + 1} \right)\left( {n + 2} \right)\binom{{n + z}}{n}}}}  &= \left( {z + \frac{1}{2}} \right)\left( {\zeta (2) - H_z^{(2)} } \right)\\
&\qquad - z\left( {z + 1} \right)\left( {\zeta (3) - H_z^{(3)} } \right) - \frac{1}{2}.
\end{split}
\end{equation}
Identity~\eqref{eq.r7il44k} is an evaluation of~\eqref{eq.l2ciolu} at $z=1$, where we also used~\eqref{eq.f9xz0ur} and the fact that
\begin{equation*}
\sum_{n = 1}^\infty  {\frac{1}{{n\left( {n + 1} \right)^3 \left( {n + 2} \right)}}}  = \frac{5}{4} - \zeta (3),
\end{equation*}
since
\begin{equation*}
\frac{1}{{n\left( {n + 1} \right)^3 \left( {n + 2} \right)}} = \frac{1}{2}\left( {\frac{1}{n} - \frac{1}{{n + 1}}} \right) - \frac{1}{2}\left( {\frac{1}{{n + 1}} - \frac{1}{{n + 2}}} \right) - \frac{1}{{\left( {n + 1} \right)^3 }}.
\end{equation*}
\end{proof}

\begin{theorem}
If $m$ is a non-negative integer, then
\begin{equation}
\begin{split}
&\sum_{n = 1}^\infty  {\frac{{2^{2n} O_{n + m} }}{{n\left( {n + 1} \right)\left( {n + 2} \right)\binom{{2\left( {n + m} \right)}}{{n + m}}\binom{{n + m}}{m}}}}\\
&\qquad = \frac{1}{{2\binom{{2m}}{m}}}\left( {m - O_m \left( {m - \frac{1}{2}} \right)\left( {m + \frac{1}{2}} \right)} \right)\left( {3\zeta (2) - 4O_m^{(2)} } \right)\\
&\qquad\quad - \frac{1}{{2\binom{{2m}}{m}}}\left( {\left( {m - \frac{1}{2}} \right)\left( {m + \frac{1}{2}} \right)\left( {7\zeta (3) - 8O_m^{(3)} } \right) - mO_m  + \frac{1}{2}} \right).
\end{split}
\end{equation}
In particular,
\begin{gather}
\sum_{n = 1}^\infty  {\frac{{2^{2n} O_n }}{{n\left( {n + 1} \right)\left( {n + 2} \right)\binom{{2n}}{n}}}}  = \frac{7}{8}\,\zeta (3) - \frac{1}{4},\\
\sum_{n = 1}^\infty  {\frac{{2^{2n} O_{n + 1} }}{{n\left( {n + 1} \right)^2 \left( {n + 2} \right)\binom{{2\left( {n + 1} \right)}}{{n + 1}}}}}  = \frac{{\pi ^2 }}{{32}} - \frac{{21}}{{16}}\,\zeta (3) + \frac{{11}}{8}.
\end{gather}
\end{theorem}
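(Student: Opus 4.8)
The plan is to imitate the proofs of the preceding corollaries. Starting from the differentiated identity~\eqref{eq.y8vfuxa}, I would substitute $z=m-\frac12$ and convert every quantity into odd-harmonic form by Lemmata~\ref{lem.ho} and~\ref{lem.bin}. The first identity of Lemma~\ref{lem.bin} (with $u=n+m$, $v=n$, and $\binom{n+m}{n}=\binom{n+m}{m}$) gives
\[
\binom{n+m-1/2}{n}^{-1}=\frac{2^{2n}\binom{2m}{m}}{\binom{2(n+m)}{n+m}\binom{n+m}{m}},
\]
the first relation of Lemma~\ref{lem.ho} gives $H_{n+m-1/2}-H_{m-1/2}=2(O_{n+m}-O_m)$, and Lemma~\ref{lem.ho} also yields $\zeta(2)-H_{m-1/2}^{(2)}=3\zeta(2)-4O_m^{(2)}$ and $\zeta(3)-H_{m-1/2}^{(3)}=7\zeta(3)-8O_m^{(3)}$ (the latter using $H_{-1/2}^{(3)}=-6\zeta(3)$); finally $z+\frac12=m$ and $z(z+1)=(m-\frac12)(m+\frac12)$.

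Clearing the common factor $2\binom{2m}{m}$, these substitutions turn~\eqref{eq.y8vfuxa} into a closed form for $\sum_{n\ge1}\frac{2^{2n}(O_{n+m}-O_m)}{n(n+1)(n+2)\binom{2(n+m)}{n+m}\binom{n+m}{m}}$. Since $O_m$ does not depend on $n$, I would split this as $\sum_{n\ge1}\frac{2^{2n}O_{n+m}}{\cdots}-O_m\sum_{n\ge1}\frac{2^{2n}}{\cdots}$ and evaluate the second sum,
\[
\sum_{n=1}^\infty\frac{2^{2n}}{n(n+1)(n+2)\binom{2(n+m)}{n+m}\binom{n+m}{m}}=\frac{1}{2\binom{2m}{m}}\Bigl((m-\tfrac12)(m+\tfrac12)(4O_m^{(2)}-3\zeta(2))+m\Bigr),
\]
by the theorem established earlier in this section for the corresponding $2^{2n}$-series. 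Collecting terms then gives the assertion; the one point requiring care is that the two contributions proportional to $3\zeta(2)-4O_m^{(2)}$ — namely $m$ (from~\eqref{eq.y8vfuxa}) and $-O_m(m-\frac12)(m+\frac12)$ (from the $O_m$-multiple of the auxiliary series) — must be combined into the single factor $m-O_m(m-\frac12)(m+\frac12)$ displayed in the theorem.

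The two particular cases follow by specialization: $m=0$, where $O_0=O_0^{(2)}=O_0^{(3)}=0$ and $\binom{0}{0}=1$, yields the first, while $m=1$, where $O_1=O_1^{(2)}=O_1^{(3)}=1$, $\binom{2}{1}=2$ and $\binom{n+1}{1}=n+1$, yields the second once $\zeta(2)$ is written as $\pi^2/6$. The main — and essentially only — obstacle is the bookkeeping: one must carry the powers of $2$ and the reciprocal binomials faithfully through Lemma~\ref{lem.bin} and then reorganize a somewhat long expression so that it assumes exactly the grouping stated. No analytic ingredient beyond~\eqref{eq.y8vfuxa}, Lemmata~\ref{lem.ho} and~\ref{lem.bin}, and the earlier evaluation of the $2^{2n}$-series is required.
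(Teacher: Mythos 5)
Your proposal is correct and follows essentially the same route as the paper, whose proof is simply ``Set $z=m-1/2$ in~\eqref{eq.y8vfuxa} and use Lemmata~\ref{lem.ho} and~\ref{lem.bin}.'' You have merely made explicit the bookkeeping the paper leaves implicit, including the necessary splitting of $O_{n+m}-O_m$ and the appeal to the earlier evaluation of the plain $2^{2n}$-series, all of which checks out.
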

\begin{proof}
Set $z=m-1/2$ in~\eqref{eq.y8vfuxa} and use Lemmata~\ref{lem.ho} and~\ref{lem.bin}.
\end{proof}

\begin{theorem}
If $z\in\mathbb C\setminus\mathbb Z^{-}$, then
\begin{equation}
\begin{split}
\sum_{n = 1}^\infty  {\frac{{\left( {H_{n + z}  - H_z } \right)^2  + H_{n + z}^{(2)}  - H_z^{(2)} }}{{n\left( {n + 1} \right)\left( {n + 2} \right)\binom{{n + z}}{z}}}}  &= H_z^{(2)}  - \zeta (2) - 2\left( {2z + 1} \right)\left( {H_z^{(3)}  - \zeta (3)} \right)\\
&\qquad+ 3z\left( {z + 1} \right)\left( {H_z^{(4)}  - \zeta (4)} \right).
\end{split}
\end{equation}
In particular,
\begin{gather}
\sum_{n = 1}^\infty  {\frac{{H_n^2  + H_n^{(2)} }}{{n\left( {n + 1} \right)\left( {n + 2} \right)}}}  = 2\zeta (3) - \zeta (2),\\
\sum_{n = 1}^\infty  {\frac{{2^{2n} \left( {O_n^2  + O_n^{(2)} } \right)}}{{n\left( {n + 1} \right)\left( {n + 2} \right)\binom{{2n}}{n}}}}  = \frac{{\pi ^4 }}{{32}} - \frac{{\pi ^2 }}{8}.
\end{gather}
\end{theorem}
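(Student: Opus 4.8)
The plan is to differentiate identity~\eqref{eq.y8vfuxa} with respect to $z$, in the same spirit as the proofs of the preceding theorems of this type. First I would record the two derivative rules that do all the work. Since $\binom{n+z}{n}=\Gamma(n+z+1)/[\Gamma(n+1)\Gamma(z+1)]$, differentiating its logarithm and using~\eqref{Harm_psi} gives $\frac{d}{dz}\ln\binom{n+z}{n}=\psi(n+z+1)-\psi(z+1)=H_{n+z}-H_z$, hence
\begin{equation*}
\frac{d}{dz}\,\frac{1}{\binom{n+z}{n}}=-\,\frac{H_{n+z}-H_z}{\binom{n+z}{n}}.
\end{equation*}
Differentiating $H_{n+z}-H_z=\psi(n+z+1)-\psi(z+1)$ once more and using $\psi^{(1)}(w+1)=\zeta(2)-H_w^{(2)}$ gives $\frac{d}{dz}(H_{n+z}-H_z)=-(H_{n+z}^{(2)}-H_z^{(2)})$.

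Next I would differentiate the $n$th summand on the left of~\eqref{eq.y8vfuxa} by the quotient rule: the term coming from the derivative of $\binom{n+z}{n}^{-1}$ supplies an extra factor $-(H_{n+z}-H_z)$, which combines with the $H_{n+z}-H_z$ already present to give the square $(H_{n+z}-H_z)^2$, while the term coming from the derivative of $H_{n+z}-H_z$ produces $-(H_{n+z}^{(2)}-H_z^{(2)})$. Thus the left side differentiates to $-1$ times the series $\sum_{n\ge1}\frac{(H_{n+z}-H_z)^2+H_{n+z}^{(2)}-H_z^{(2)}}{n(n+1)(n+2)\binom{n+z}{n}}$ of the statement, once one recalls $\binom{n+z}{n}=\binom{n+z}{z}$. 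For the right side of~\eqref{eq.y8vfuxa} I would use the two relations $\frac{d}{dz}H_z^{(2)}=2\bigl(\zeta(3)-H_z^{(3)}\bigr)$ and $\frac{d}{dz}H_z^{(3)}=3\bigl(\zeta(4)-H_z^{(4)}\bigr)$, both immediate from the displayed formula linking $H_z^{(r)}$ to $\psi^{(r-1)}(z+1)$; differentiating $(z+\tfrac12)(\zeta(2)-H_z^{(2)})-z(z+1)(\zeta(3)-H_z^{(3)})-\tfrac12$ and collecting terms then gives $(\zeta(2)-H_z^{(2)})-2(2z+1)(\zeta(3)-H_z^{(3)})+3z(z+1)(\zeta(4)-H_z^{(4)})$. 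Equating the two differentiated expressions and multiplying through by $-1$ yields the claimed identity.

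For the two specializations I would proceed as with the earlier corollaries. At $z=0$ one has $H_0^{(k)}=0$, so the right side collapses to $2\zeta(3)-\zeta(2)$. At $z=-\tfrac12$ the coefficient $2z+1$ vanishes, the numerator becomes $4\bigl(O_n^2+O_n^{(2)}\bigr)$ by Lemma~\ref{lem.ho}, the binomial factor equals $\binom{n-1/2}{-1/2}=2^{-2n}\binom{2n}{n}$ by Lemma~\ref{lem.bin} (use $\binom{n-1/2}{-1/2}=\binom{n-1/2}{n}$ and the first identity of that lemma), and the right side is evaluated from $H_{-1/2}^{(2)}=-2\zeta(2)$, $H_{-1/2}^{(4)}=-14\zeta(4)$ together with $\zeta(2)=\pi^2/6$, $\zeta(4)=\pi^4/90$. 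I expect the only delicate points to be the sign bookkeeping in the quotient-rule step, where it is easy to confuse the sign of the quadratic term with that of the second-order term, and the correct reading of the half-integer binomial coefficient from Lemma~\ref{lem.bin}; neither is a genuine obstacle, and the termwise differentiation is legitimate because the series converges locally uniformly in $z$, just as for~\eqref{main} and~\eqref{eq.y8vfuxa}.
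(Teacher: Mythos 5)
Your proposal is correct and follows exactly the paper's proof, which consists of the single instruction to differentiate~\eqref{eq.y8vfuxa} with respect to $z$; you have merely supplied the routine details (the quotient rule producing the square, the derivatives $\frac{d}{dz}H_z^{(r)}=r\bigl(\zeta(r+1)-H_z^{(r+1)}\bigr)$, and the specializations at $z=0$ and $z=-\tfrac12$), all of which check out.
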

\begin{proof}
Differentiate~\eqref{eq.y8vfuxa} with respect to $z$.
\end{proof}

\begin{lemma}
If $z\in\mathbb C\setminus\mathbb Z^{-}$, then
\begin{equation}\label{eq.vadi1jm}
\begin{split}
\sum_{n = 1}^\infty  {\frac{1}{{n\left( {n + 1} \right)\left( {n + 2} \right)\left( {n + 3} \right)\binom{{n + z}}{n}}}}  &= \frac{1}{{12}}z\left( {z + 1} \right)\left( {z + 2} \right)\left( {H_z^{(2)}  - \zeta (2)} \right)\\
&\qquad\qquad+ \frac{{z^2 }}{{12}} + \frac{{5z}}{{24}} + \frac{1}{{18}}.
\end{split}
\end{equation}
\end{lemma}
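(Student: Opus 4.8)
The plan is to reduce \eqref{eq.vadi1jm} to the already-established identity \eqref{eq.lmcv0zf}, in exactly the way \eqref{eq.lmcv0zf} was reduced to \eqref{eq.bhasxe8}. The starting point is the partial-fraction decomposition
\[
\frac{1}{n(n+1)(n+2)(n+3)} = \frac12\left(\frac{1}{n(n+1)(n+2)} - \frac{1}{n(n+2)(n+3)}\right),
\]
which exhibits the left-hand side of \eqref{eq.vadi1jm} as half the difference of \eqref{eq.lmcv0zf} and the auxiliary series $U(z):=\sum_{n=1}^\infty \frac{1}{n(n+2)(n+3)\binom{n+z}{n}}$. Hence it suffices to evaluate $U(z)$, and I would obtain it by summing \eqref{eq.lmcv0zf} over $z$ from $1$ to $r$, in complete analogy with the derivation of \eqref{eq.dhruknq} from \eqref{eq.bhasxe8}.

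On the left this produces $\sum_{n}\frac{1}{n(n+1)(n+2)}\sum_{z=1}^r \binom{n+z}{n}^{-1}$. For $n\ge 2$ the inner sum is handled by \eqref{bs}: writing $\binom{n+r}{n-1}=\frac{n}{r+1}\binom{n+r}{n}$ gives $\sum_{z=1}^r \binom{n+z}{n}^{-1} = \frac{1}{n-1}\bigl(1-(r+1)\binom{n+r}{n}^{-1}\bigr)$, while the $n=1$ term must be taken separately and equals $\frac16(H_{r+1}-1)$. The term carrying the binomial coefficient, $-(r+1)\sum_{n\ge2}\frac{1}{(n-1)n(n+1)(n+2)\binom{n+r}{n}}$, is reindexed by $n\mapsto n+1$ into $-\sum_{n\ge1}\frac{1}{n(n+2)(n+3)\binom{n+r+1}{n}}=-U(r+1)$, while the purely rational part telescopes, $\sum_{n\ge2}\frac{1}{(n-1)n(n+1)(n+2)}=\frac1{18}$. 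On the right I would expand $\frac12 z(z+1)(H_z^{(2)}-\zeta(2))+\frac z2+\frac14$ and sum term by term using $\sum_{z=1}^r H_z^{(2)}=(r+1)H_r^{(2)}-H_r$, the identity \eqref{eq.izgpi03}, and a companion formula for $\sum_{z=1}^r z^2 H_z^{(2)}$ obtained from $\sum_{z=1}^r z^2=\tfrac16 r(r+1)(2r+1)$ by Abel summation; together with $\sum_{z=1}^r z^2$ and $\sum_{z=1}^r z$ this writes both sides entirely through $H_r$, $H_r^{(2)}$, $\zeta(2)$ and polynomials in $r$. Equating, solving for $U(r+1)$, and replacing $r+1$ by $z$ (via $H_{r+1}=H_r+\frac1{r+1}$ and $H_{r+1}^{(2)}=H_r^{(2)}+\frac1{(r+1)^2}$) then yields a closed form $U(z)=\bigl(\text{polynomial in }z\bigr)\bigl(\zeta(2)-H_z^{(2)}\bigr)+\bigl(\text{polynomial in }z\bigr)$; as a sanity check, $U(0)=\sum_{n\ge1}\frac{1}{n(n+2)(n+3)}=\tfrac5{36}$, which is consistent with the constant $\tfrac1{18}$ in \eqref{eq.vadi1jm}.

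Identity \eqref{eq.vadi1jm} then follows by inserting \eqref{eq.lmcv0zf} and the expression just obtained for $U(z)$ into the decomposition above and simplifying: the coefficient of $H_z^{(2)}-\zeta(2)$ collapses to $\frac1{12}z(z+1)(z+2)$ and the rational remainder to $\frac{z^2}{12}+\frac{5z}{24}+\frac1{18}$. The only genuine work, and the place where sign or shift errors are most likely, is the bookkeeping in the middle step: isolating the $n=1$ term before applying \eqref{bs}, reindexing the binomial sum correctly into $U(r+1)$, assembling $\sum_{z=1}^r z^2 H_z^{(2)}$, and translating the resulting identity from the variable $r$ to $z=r+1$. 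Nothing here is conceptually new; it is the same three-step template (sum or differentiate \eqref{main}; apply \eqref{bs}; recombine telescoping rational series) already executed for \eqref{eq.bhasxe8}, \eqref{eq.dhruknq} and \eqref{eq.lmcv0zf}, only with heavier polynomial weights.
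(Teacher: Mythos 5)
Your proposal is correct and follows essentially the same route as the paper: both proofs produce an auxiliary shifted series by summing an already-established identity over $z$ from $1$ to $r$ using \eqref{bs} (with the $n=1$ term isolated), the closed forms for $\sum_{z=1}^r z^j H_z^{(2)}$, and a reindexing $n\mapsto n+1$ of the leftover binomial sum, and then recover \eqref{eq.vadi1jm} by a partial-fraction recombination. The only difference is cosmetic --- the paper sums \eqref{eq.dhruknq} to obtain $\sum_{n\ge 1}1/\bigl(n(n+3)\binom{n+z}{n}\bigr)$ and combines three identities, whereas you sum \eqref{eq.lmcv0zf} to obtain $\sum_{n\ge 1}1/\bigl(n(n+2)(n+3)\binom{n+z}{n}\bigr)$ and combine two --- and every concrete step you state (the decomposition with the factor $\tfrac12$, the evaluation $\sum_{z=1}^r\binom{n+z}{n}^{-1}=\tfrac{1}{n-1}\bigl(1-(r+1)\binom{n+r}{n}^{-1}\bigr)$, the reindexed term $-U(r+1)$, the constant $\tfrac1{18}$, and the check $U(0)=\tfrac5{36}$) is accurate.
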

\begin{proof}
Sum~\eqref{eq.dhruknq} over $z$ using~\eqref{bs},~\eqref{eq.izgpi03} and~\cite[Equation (3.6)]{adegoke16}
\begin{equation*}
\sum_{z = 1}^r {z^2 H_z^{(2)} }  = \frac{{r\left( {r + 1} \right)\left( {2r + 1} \right)}}{6}H_r^{(2)}  - \frac{1}{6}H_r  + \frac{r}{3} - \frac{r^2}{6},
\end{equation*}
to obtain
\begin{equation}\label{eq.pv5xq4g}
\sum_{n = 1}^\infty  {\frac{1}{{n\left( {n + 3} \right)\binom{{n + z}}{n}}}}  = \frac{1}{6}z\left( {z - 1} \right)\left( {z - 2} \right)\left( {H_z^{(2)}  - \zeta (2)} \right) + \frac{1}{6}z^2  - \frac{7}{{12}}z + \frac{{11}}{{18}}.
\end{equation}
Identity~\eqref{eq.vadi1jm} is obtained by adding~\eqref{eq.pv5xq4g} and \eqref{eq.lmcv0zf} and subtracting~\eqref{eq.dhruknq}.
\end{proof}

\begin{theorem}
If $m$ is a non-negative integer, then
\begin{equation}
\begin{split}
&\sum_{n = 1}^\infty  {\frac{{2^{2n} }}{{n\left( {n + 1} \right)\left( {n + 2} \right)\left( {n + 3} \right)\binom{{2\left( {n + m} \right)}}{{n + m}}}\binom{n + m}m}}\\
&\qquad  = \frac{1}{{12\binom{{2m}}{m}}}\left( {m - \frac{1}{2}} \right)\left( {m + \frac{1}{2}} \right)\left( {m + \frac{3}{2}} \right)\left( {4O_m^{(2)}  - 3\zeta (2)} \right)\\
&\qquad\quad + \frac{1}{{4\binom{{2m}}{m}}}\left( {\frac{{m^2 }}{3} + \frac{m}{2} - \frac{1}{9}} \right).
\end{split}
\end{equation}
In particular,
\begin{gather}
\sum_{n = 1}^\infty  {\frac{{2^{2n} }}{{n\left( {n + 1} \right)\left( {n + 2} \right)\left( {n + 3} \right)\binom{{2n}}{n}}}}  = \frac{{\pi ^2 }}{{64}} - \frac{1}{{36}},\\
\sum_{n = 1}^\infty  {\frac{{2^{2n} }}{{n\left( {n + 1} \right)^2 \left( {n + 2} \right)\left( {n + 3} \right)\binom{{2\left( {n + 1} \right)}}{{n + 1}}}}}  = \frac{{29}}{{72}} - \frac{{5\pi ^2 }}{{128}},\\
\sum_{n = 1}^\infty  {\frac{{2^{2n + 1} }}{{n\left( {n + 1} \right)^2 \left( {n + 2} \right)^2 \left( {n + 3} \right)\binom{{2\left( {n + 2} \right)}}{{n + 2}}}}}  = \frac{{65}}{{72}} - \frac{{35\pi ^2 }}{{384}}.
\end{gather}
\end{theorem}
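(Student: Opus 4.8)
The plan is to follow verbatim the template used for every other ``$2^{2n}$ over a central binomial'' evaluation in this section: specialize a parameter-$z$ lemma at $z = m - 1/2$ and clean up with Lemmata~\ref{lem.ho} and~\ref{lem.bin}. The relevant parameter-$z$ identity here is~\eqref{eq.vadi1jm}, which expresses $\sum_{n\ge 1} 1/\bigl(n(n+1)(n+2)(n+3)\binom{n+z}{n}\bigr)$ as $\tfrac1{12}z(z+1)(z+2)\bigl(H_z^{(2)}-\zeta(2)\bigr)$ plus the quadratic polynomial $\tfrac{z^2}{12}+\tfrac{5z}{24}+\tfrac1{18}$.

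First I would set $z = m - 1/2$ in~\eqref{eq.vadi1jm}. The cubic factor $z(z+1)(z+2)$ becomes $\bigl(m-\tfrac12\bigr)\bigl(m+\tfrac12\bigr)\bigl(m+\tfrac32\bigr)$, and by the second identity of Lemma~\ref{lem.ho} we have $H_{m-1/2}^{(2)} - \zeta(2) = 4\,O_m^{(2)} - 3\,\zeta(2)$, which produces the first term on the right-hand side (after the division below). The surviving polynomial $\tfrac{z^2}{12}+\tfrac{5z}{24}+\tfrac1{18}$ at $z = m-\tfrac12$ must then be expanded: it collapses to $\tfrac{m^2}{12}+\tfrac{m}{8}-\tfrac1{36}$, which is exactly $\tfrac14\bigl(\tfrac{m^2}{3}+\tfrac{m}{2}-\tfrac19\bigr)$, the claimed rational part.

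Next I would rewrite the summand on the left. By the first identity of Lemma~\ref{lem.bin} applied with $u = n+m$ and $v = n$, one gets $\binom{n+m-1/2}{n} = 2^{-2n}\binom{2(n+m)}{n+m}\binom{n+m}{n}\binom{2m}{m}^{-1}$, and since $\binom{n+m}{n} = \binom{n+m}{m}$, this turns $1/\binom{n+m-1/2}{n}$ into $2^{2n}\binom{2m}{m}\bigl/\bigl(\binom{2(n+m)}{n+m}\binom{n+m}{m}\bigr)$. Dividing the specialized identity through by $\binom{2m}{m}$ then gives precisely the asserted closed form.

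Finally, the three displayed special cases come from $m = 0,1,2$: using $O_0^{(2)}=0$, $O_1^{(2)}=1$, $O_2^{(2)}=\tfrac{10}{9}$, the values $\binom00=1$, $\binom21=2$, $\binom42=6$, the simplifications $\binom{n+1}{1}=n+1$ and $\binom{n+2}{2}=(n+1)(n+2)/2$ (which absorb the extra powers into the denominators and account for the $2^{2n+1}$ in the last case), and $\zeta(2)=\pi^2/6$. I do not expect any genuine obstacle: the entire argument is the same mechanical specialization carried out in the earlier theorems of this section, and the only real work is the elementary but mildly fussy bookkeeping of expanding the half-integer polynomial and matching the binomial factors.
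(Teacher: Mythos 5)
Your proposal is correct and follows exactly the paper's own (one-line) proof: specialize~\eqref{eq.vadi1jm} at $z=m-1/2$ and simplify with Lemmata~\ref{lem.ho} and~\ref{lem.bin}; your polynomial expansion $\tfrac{m^2}{12}+\tfrac{m}{8}-\tfrac1{36}$ and the binomial conversion both check out.
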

\begin{proof}
Set $z=m - 1/2$ in~\eqref{eq.vadi1jm}.
\end{proof}

\begin{theorem}
If $z\in\mathbb C\setminus\mathbb Z^{-}$, then
\begin{equation}\label{eq.qaytndb}
\begin{split}
&\sum_{n = 1}^\infty  {\frac{{H_{n + z} }}{{n\left( {n + 1} \right)\left( {n + 2} \right)\left( {n + 3} \right)\binom{{n + z}}{n}}}} \\
&\qquad=   \left( {\frac{1}{{12}}z\left( {z + 1} \right)\left( {z + 2} \right)H_z  - \frac{{z\left( {z + 2} \right)}}{4} - \frac{1}{6}} \right)\left( {H_z^{(2)}  - \zeta (2)} \right) \\
&\qquad\qquad+ \frac{1}{6}z\left( {z + 1} \right)\left( {z + 2} \right)\left( {H_z^{(3)}  - \zeta (3)} \right) + \left( {\frac{{z^2 }}{{12}} + \frac{{5z}}{{24}} + \frac{1}{{18}}} \right)H_z  - \frac{z}{6} - \frac{5}{{24}}.
\end{split}
\end{equation}
In particular,
\begin{equation}
\sum_{n = 1}^\infty  {\frac{{H_n }}{{n\left( {n + 1} \right)\left( {n + 2} \right)\left( {n + 3} \right)}}}  = \frac{{\pi ^2 }}{{36}} - \frac{5}{{24}}.
\end{equation}
\end{theorem}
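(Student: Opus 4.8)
\emph{Proof proposal.} The plan is to follow the template already used for the harmonic-number analogues~\eqref{eq.jdzztai}, \eqref{eq.pwogiuk} and~\eqref{eq.l2ciolu}: differentiate the companion Lemma~\eqref{eq.vadi1jm} with respect to $z$, and then feed~\eqref{eq.vadi1jm} back in to re-express the resulting harmonic-number sum.

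First I would differentiate~\eqref{eq.vadi1jm} term by term in $z$. Only two elementary derivative rules are needed: from $\binom{n+z}{n}=\Gamma(n+z+1)/\bigl(\Gamma(n+1)\Gamma(z+1)\bigr)$ together with~\eqref{Harm_psi} one has
\begin{equation*}
\frac{d}{dz}\,\frac{1}{\binom{n+z}{n}}=-\frac{H_{n+z}-H_z}{\binom{n+z}{n}},
\end{equation*}
and from the polygamma formulation of the generalized harmonic numbers one has $\dfrac{d}{dz}H_z^{(2)}=2\bigl(\zeta(3)-H_z^{(3)}\bigr)$, which is exactly the step that carried~\eqref{main} into~\eqref{eq.lchpe3r}. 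Applying the product rule to the right-hand side of~\eqref{eq.vadi1jm}, expanding $z(z+1)(z+2)=z^3+3z^2+2z$, and using $\tfrac1{12}(3z^2+6z+2)=\tfrac{z(z+2)}{4}+\tfrac16$, this yields the intermediate identity
\begin{equation*}
\begin{split}
\sum_{n=1}^\infty\frac{H_{n+z}-H_z}{n(n+1)(n+2)(n+3)\binom{n+z}{n}}
&=\left(\frac{z(z+2)}{4}+\frac16\right)\bigl(\zeta(2)-H_z^{(2)}\bigr)\\
&\qquad+\frac{z(z+1)(z+2)}{6}\bigl(H_z^{(3)}-\zeta(3)\bigr)-\frac z6-\frac5{24}.
\end{split}
\end{equation*}

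Next I would write $H_{n+z}=(H_{n+z}-H_z)+H_z$ on the left-hand side of the target identity, split the sum accordingly, and evaluate the remaining piece $H_z\sum_{n\ge1}\bigl(n(n+1)(n+2)(n+3)\binom{n+z}{n}\bigr)^{-1}$ by~\eqref{eq.vadi1jm} itself. Adding this to the intermediate identity and collecting terms — the coefficient of $\bigl(H_z^{(2)}-\zeta(2)\bigr)$ absorbs the extra summand $\tfrac1{12}z(z+1)(z+2)H_z$, and $H_z$ also acquires the coefficient $\tfrac{z^2}{12}+\tfrac{5z}{24}+\tfrac1{18}$ — gives~\eqref{eq.qaytndb}. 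For the stated special case I would set $z=0$: then $\binom{n}{n}=1$ and $H_0=H_0^{(2)}=H_0^{(3)}=0$, so all terms on the right-hand side of~\eqref{eq.qaytndb} vanish except $-\tfrac16\bigl(0-\zeta(2)\bigr)-\tfrac5{24}=\tfrac16\zeta(2)-\tfrac5{24}=\tfrac{\pi^2}{36}-\tfrac5{24}$.

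The only step requiring more than bookkeeping is the justification of differentiation under the summation sign. As in all the earlier proofs it suffices to establish the identity for real $z>0$, where $\binom{n+z}{n}\ge 1$ and hence the summand and its $z$-derivative are $O(n^{-4}\log n)$ uniformly on compact subintervals, so term-by-term differentiation is legitimate there; the resulting identity between meromorphic functions of $z$ then extends to all $z\in\mathbb C\setminus\mathbb Z^{-}$ by analytic continuation. Everything else is the routine collection of polynomial coefficients, entirely parallel to the lower-order cases treated above.
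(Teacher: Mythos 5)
Your proposal is correct and follows essentially the same route as the paper: differentiate~\eqref{eq.vadi1jm} with respect to $z$ to obtain the intermediate identity (which coincides with the paper's~\eqref{eq.z5mqka6} after flipping the sign inside the first factor), then re-use~\eqref{eq.vadi1jm} to replace $H_z\sum_{n\ge1}\bigl(n(n+1)(n+2)(n+3)\binom{n+z}{n}\bigr)^{-1}$ and collect coefficients. The only difference is that you additionally justify term-by-term differentiation and analytic continuation, which the paper leaves implicit.
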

\begin{proof}
Differentiate~\eqref{eq.vadi1jm} to obtain
\begin{equation}\label{eq.z5mqka6}
\begin{split}
\sum_{n = 1}^\infty  {\frac{{H_{n + z}  - H_z }}{{n\left( {n + 1} \right)\left( {n + 2} \right)\left( {n + 3} \right)\binom{{n + z}}{n}}}}&= - \left( {\frac{{z\left( {z + 2} \right)}}{4} + \frac{1}{6}} \right)\left( {H_z^{(2)}  - \zeta (2)} \right)\\
&\qquad + \frac{1}{6}z\left( {z + 1} \right)\left( {z + 2} \right)\left( {H_z^{(3)}  - \zeta (3)} \right) - \frac{z}{6} - \frac{5}{{24}},
\end{split}
\end{equation}
and hence~\eqref{eq.qaytndb} by a repeated use of~\eqref{eq.vadi1jm}.
\end{proof}

\begin{theorem}
If $m$ is a non-negative integer, then
\begin{equation}
\begin{split}
&\sum_{n = 1}^\infty  {\frac{{2^{2n} O_{n + m} }}{{n\left( {n + 1} \right)\left( {n + 2} \right)\left( {n + 3} \right)\binom{{2\left( {n + m} \right)}}{{n + m}}\binom{{n + m}}{m}}}} \\
&\qquad = \frac{1}{{2\binom{{2m}}{m}}}\left( {\frac{1}{6}\left( {m - \frac{1}{2}} \right)\left( {m + \frac{1}{2}} \right)\left( {m + \frac{3}{2}} \right)O_m  - \frac{{m^2 }}{4} - \frac{m}{4} + \frac{1}{{48}}} \right)\left( {4O_m^{(2)}  - 3\zeta (2)} \right)\\
&\qquad\quad + \frac{1}{{12\binom{{2m}}{m}}}\left( {m - \frac{1}{2}} \right)\left( {m + \frac{1}{2}} \right)\left( {m + \frac{3}{2}} \right)\left( {8O_m^{(3)}  - 7\zeta (3)} \right)\\
&\qquad\quad + \frac{1}{{2\binom{{2m}}{m}}}\left( {\frac{1}{{36}}\left( {6m^2  + 9m - 2} \right)O_m  - \frac{m}{6} - \frac{1}{8}} \right).
\end{split}
\end{equation}
In particular,
\begin{gather}
\sum_{n = 1}^\infty  {\frac{{2^{2n} O_n }}{{n\left( {n + 1} \right)\left( {n + 2} \right)\left( {n + 3} \right)\binom{{2n}}{n}}}}  = \frac{7}{{32}}\,\zeta (3) - \frac{{\pi ^2 }}{{192}} - \frac{1}{{16}},\\
\sum_{n = 1}^\infty  {\frac{{2^{2n} O_{n + 1} }}{{n\left( {n + 1} \right)^2 \left( {n + 2} \right)\left( {n + 3} \right)\binom{{2\left( {n + 1} \right)}}{{n + 1}}}}}  = \frac{{137}}{{288}} + \frac{{\pi ^2 }}{{48}} - \frac{{35}}{{64}}\zeta (3).
\end{gather}
\end{theorem}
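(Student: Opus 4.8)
The plan is to follow the template already used for the earlier ``$O_{n+m}$'' results: specialise the differentiated identity \eqref{eq.z5mqka6} at $z = m - \tfrac12$ and then translate every half-integer quantity into an odd-harmonic-number quantity via Lemmata~\ref{lem.ho} and~\ref{lem.bin}.

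First I would set $z = m - \tfrac12$ in \eqref{eq.z5mqka6}. On the left-hand side the first identity of Lemma~\ref{lem.bin}, applied with $u = n+m$ and $v = n$, gives $1/\binom{n+m-1/2}{n} = 2^{2n}\binom{2m}{m}/(\binom{2(n+m)}{n+m}\binom{n+m}{m})$ after using $\binom{n+m}{n} = \binom{n+m}{m}$, while $H_{k-1/2} = 2O_k - 2\ln 2$ from Lemma~\ref{lem.ho} yields $H_{n+m-1/2} - H_{m-1/2} = 2(O_{n+m} - O_m)$. Hence the left side of \eqref{eq.z5mqka6} becomes $2\binom{2m}{m}$ times the series $\sum_{n\ge 1} 2^{2n}(O_{n+m}-O_m)/(n(n+1)(n+2)(n+3)\binom{2(n+m)}{n+m}\binom{n+m}{m})$.

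Next I would rewrite the right-hand side of \eqref{eq.z5mqka6} at $z = m - \tfrac12$. Lemma~\ref{lem.ho} gives $H_{m-1/2}^{(2)} = 4O_m^{(2)} - 2\zeta(2)$, and, using $H_{-1/2}^{(3)} = -6\zeta(3)$ together with $H_{k-1/2}^{(3)} - H_{-1/2}^{(3)} = 8O_k^{(3)}$, also $H_{m-1/2}^{(3)} = 8O_m^{(3)} - 6\zeta(3)$; thus $H_z^{(2)} - \zeta(2)$ turns into $4O_m^{(2)} - 3\zeta(2)$ and $H_z^{(3)} - \zeta(3)$ into $8O_m^{(3)} - 7\zeta(3)$. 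Expanding the polynomial factors $z(z+2)$ and $z(z+1)(z+2)$ at $z = m-\tfrac12$ as polynomials in $m$ and then dividing by $2\binom{2m}{m}$ produces a closed form for the ``$O_{n+m}-O_m$'' series. Adding $O_m$ times the companion evaluation of $\sum_{n\ge 1} 2^{2n}/(n(n+1)(n+2)(n+3)\binom{2(n+m)}{n+m}\binom{n+m}{m})$ --- the theorem obtained from \eqref{eq.vadi1jm} by setting $z = m-\tfrac12$ --- recovers the series with $O_{n+m}$ alone in the numerator and, after collecting terms into the coefficients of $4O_m^{(2)} - 3\zeta(2)$ and $8O_m^{(3)} - 7\zeta(3)$, gives the stated identity. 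The two special cases then follow by putting $m=0$ (where $\binom00 = 1$ and $O_0 = O_0^{(2)} = O_0^{(3)} = 0$) and $m=1$ (where $\binom21 = 2$ and $O_1 = O_1^{(2)} = O_1^{(3)} = 1$), and using $\zeta(2) = \pi^2/6$.

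I do not anticipate a genuine obstacle: the only work is the careful bookkeeping of the rational coefficients when the half-integer polynomials are re-expanded in $m$ and when the $O_m$-term is peeled off, which is entirely routine and parallels the preceding corollaries.
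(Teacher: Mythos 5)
Your plan is correct and is essentially the paper's own proof: the paper likewise substitutes $z=m-\tfrac12$ into \eqref{eq.z5mqka6} and invokes Lemmata~\ref{lem.ho} and~\ref{lem.bin}, with the restoration of the lone $O_{n+m}$ numerator via $O_m$ times the companion non-harmonic series being the implicit bookkeeping step you spell out. Nothing further is needed.
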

\begin{proof}
Write $m-1/2$ for $z$ in~\eqref{eq.z5mqka6} and use Lemmata~\ref{lem.ho} and~\ref{lem.bin}.
\end{proof}

\begin{theorem}
If $z\in\mathbb C\setminus\mathbb Z^{-}$, then
\begin{equation}
\begin{split}
\sum_{n = 1}^\infty  {\frac{{\left( {H_{n + z}  - H_z } \right)^2  + H_{n + z}^{(2)}  - H_z^{(2)} }}{{n\left( {n + 1} \right)\left( {n + 2} \right)\left( {n + 3} \right)\binom{{n + z}}{z}}}}  &= \frac{1}{2}\left( {z + 1} \right)\left( {H_z^{(2)}  - \zeta (2)} \right)\\
&\qquad- \left( {z\left( {z + 2} \right) + \frac{2}{3}} \right)\left( {H_z^{(3)}  - \zeta (3)} \right)\\
&\qquad\quad+ \frac{1}{2}z\left( {z + 1} \right)\left( {z + 2} \right)\left( {H_z^{(4)}  - \zeta (4)} \right) + \frac{1}{6}.
\end{split}
\end{equation}
In particular,
\begin{gather}
\sum_{n = 1}^\infty  {\frac{{H_n^2  + H_n^{(2)} }}{{n\left( {n + 1} \right)\left( {n + 2} \right)\left( {n + 3} \right)}}}  = \frac{2}{3}\,\zeta (3) - \frac{{\pi ^2 }}{{12}} + \frac{1}{6},\\
\sum_{n = 1}^\infty  {\frac{{2^{2n} \left( {O_n^2  + O_n^{(2)} } \right)}}{{n\left( {n + 1} \right)\left( {n + 2} \right)\left( {n + 3} \right)\binom{{2n}}{n}}}}  = \frac{{\pi ^4 }}{{128}} - \frac{{\pi ^2 }}{{32}} - \frac{7}{{48}}\,\zeta (3) + \frac{1}{{24}}.
\end{gather}
\end{theorem}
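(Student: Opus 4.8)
The plan is to obtain the identity by differentiating \eqref{eq.z5mqka6} with respect to $z$, exactly as the preceding squared-harmonic theorems were derived from their linear counterparts. First I would record the three elementary derivative rules that drive the computation. Since $\binom{n+z}{n}=\binom{n+z}{z}=\Gamma(n+z+1)/(\Gamma(z+1)\Gamma(n+1))$, differentiating $\ln\binom{n+z}{n}$ and invoking \eqref{Harm_psi} gives $\frac{d}{dz}\binom{n+z}{n}^{-1}=-(H_{n+z}-H_z)\binom{n+z}{n}^{-1}$. Next, from $H_w^{(2)}=\zeta(2)-\psi^{(1)}(w+1)$ one gets $\frac{d}{dz}(H_{n+z}-H_z)=-(H_{n+z}^{(2)}-H_z^{(2)})$. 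More generally, the stated link between $H_z^{(m)}$ and $\psi^{(m-1)}(z+1)$ yields $\frac{d}{dz}\bigl(H_z^{(m)}-\zeta(m)\bigr)=m\bigl(\zeta(m+1)-H_z^{(m+1)}\bigr)$, which I will use for $m=2$ and $m=3$ on the right-hand side.

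Differentiating the left-hand side of \eqref{eq.z5mqka6} term by term (the interchange of $\sum$ and $d/dz$ being justified by local uniform convergence on compact subsets of $\mathbb{C}\setminus\mathbb{Z}^{-}$) and applying the product rule together with the first two rules above produces precisely $-\sum_{n\ge1}\frac{(H_{n+z}-H_z)^2+H_{n+z}^{(2)}-H_z^{(2)}}{n(n+1)(n+2)(n+3)\binom{n+z}{n}}$. Hence the series in the theorem equals $-\frac{d}{dz}$ of the right-hand side of \eqref{eq.z5mqka6}. It then remains to differentiate the polynomial-times-$(\zeta-H^{(m)})$ expression there, collect terms of like weight, and simplify. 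The one place to be careful is the coefficient of $H_z^{(3)}-\zeta(3)$: it receives contributions both from differentiating $\bigl(\tfrac{z(z+2)}{4}+\tfrac16\bigr)(H_z^{(2)}-\zeta(2))$ and from differentiating $\tfrac16 z(z+1)(z+2)(H_z^{(3)}-\zeta(3))$, and these must be consolidated to $z(z+2)+\tfrac23$; likewise one must keep the signs straight in passing from $\frac{d}{dz}(H_{n+z}-H_z)$ to the $H_{n+z}^{(2)}-H_z^{(2)}$ term. This is the main (and only mildly delicate) obstacle; the rest is routine algebra.

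For the special cases I would substitute $z=0$ and $z=-1/2$. At $z=0$ we have $H_0=H_0^{(m)}=0$ and $\binom{n}{0}=1$, so the identity collapses to $\sum_{n\ge1}\frac{H_n^2+H_n^{(2)}}{n(n+1)(n+2)(n+3)}=-\tfrac12\zeta(2)+\tfrac23\zeta(3)+\tfrac16$, i.e.\ the first displayed consequence after writing $\zeta(2)=\pi^2/6$. At $z=-1/2$, Lemma~\ref{lem.bin} gives $\binom{n-1/2}{n}^{-1}=2^{2n}\binom{2n}{n}^{-1}$, and Lemma~\ref{lem.ho} gives $H_{n-1/2}-H_{-1/2}=2O_n$ and $H_{n-1/2}^{(2)}-H_{-1/2}^{(2)}=4O_n^{(2)}$, so the numerator becomes $4(O_n^2+O_n^{(2)})$. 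Evaluating the right-hand side with $H_{-1/2}^{(2)}=-2\zeta(2)$, $H_{-1/2}^{(3)}=-6\zeta(3)$, $H_{-1/2}^{(4)}=-14\zeta(4)$ (again from Lemma~\ref{lem.ho}) and dividing by $4$ yields the stated closed form after inserting $\zeta(2)=\pi^2/6$ and $\zeta(4)=\pi^4/90$.
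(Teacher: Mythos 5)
Your proposal is correct and follows exactly the paper's route: the paper's entire proof is ``Differentiate~\eqref{eq.z5mqka6} with respect to $z$,'' and your differentiation rules, sign bookkeeping, and the substitutions $z=0$ and $z=-1/2$ (via Lemmata~\ref{lem.ho} and~\ref{lem.bin}) for the special cases all check out.
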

\begin{proof}
Differentiate~\eqref{eq.z5mqka6} with respect to $z$.
\end{proof}

% \section{Acknowledgments}

% The authors thank the referee and the Editor-in-Chief for their very helpful comments that improved the exposition of the paper.

\end{document}